\theoremstyle{plain}
\newtheorem*{prop}{Proposition}
\newtheorem{thm}{Theorem}
\newtheorem*{lem}{Lemma}
\newtheorem*{cor}{Corollary}
\theoremstyle{definition}
\newtheorem*{example}{Example}
\newtheorem*{defn}{Definition}
\newtheorem*{rem}{Remark}
\theoremstyle{remark}
\newcommand{\lie}[1]{\mathfrak{#1}}
\newcounter{cnt}
\def\mydggeometry{\makeatletter\dg@YGRID=1\dg@XGRID=20\unitlength=0.003pt\makeatother}
\makeatother \theoremstyle{remark}
\numberwithin{equation}{section}
\def\section{\def\@secnumfont{\mdseries}\@startsection{section}{1}%
  \z@{.7\linespacing\@plus\linespacing}{.5\linespacing}%
  {\normalfont\scshape\centering}}
\def\subsection{\def\@secnumfont{\bfseries}\@startsection{subsection}{2}%
  {\parindent}{.5\linespacing\@plus.7\linespacing}{-.5em}%
  {\normalfont\bfseries}}
\begin{document}

\title[Marked multi-colorings, Lie superalgebras and right-angled Coxeter groups]{Marked multi-colorings, partially commutative Lie superalgebras and right-angled Coxeter groups}

\author{Chaithra P}\address{Department of Mathematics, Indian Institute of Science, Bangalore 560012, India}
\email{chaithrap@iisc.ac.in}
\thanks{C.P. acknowledges funding from the Prime minister's research fellowship (TF/PMRF-22-5467.03) for carrying out this research.}

\author{Deniz Kus}\address{Faculty of Mathematics\\ University of Bochum\\ Universit{\"a}tsstr. 150, Bochum 44801, 
Germany}
\email{deniz.kus@rub.de}
\thanks{}

\author{R. Venkatesh}
\address{Department of Mathematics, Indian Institute of Science, Bangalore 560012, India}
\address{Visiting Faculty, Department of Mathematics, Indian Institute of Technology Madras, Chennai 600036, India}
\email{rvenkat@iisc.ac.in}
\thanks{R.V. was partially supported by the ANRF Core Grant:
ANRF/CRG/2023/008780.}

\subjclass[2020]{}

\begin{abstract}
Infinite-dimensional Lie superalgebras, particularly Borcherds-Kac-Moody (BKM) superalgebras, play a fundamental role in mathematical physics, number theory, and representation theory. In this paper, we study the root multiplicities of BKM superalgebras via their denominator identities, deriving explicit combinatorial formulas in terms of graph invariants associated with marked (quasi) Dynkin diagrams. We introduce partially commutative Lie superalgebras (PCLSAs) and provide a direct combinatorial proof of their denominator identity, where the generating set runs over the super heaps monoid. A key notation in our approach is marked multi-colorings and their associated polynomials, which generalize chromatic polynomials and offer a method for computing root multiplicities. As applications, we characterize the roots of PCLSAs and establish connections between their universal enveloping algebras and right-angled Coxeter groups, leading to explicit formulas for their Hilbert series. These results further deepen the interplay between Lie superalgebras, graph theory, and algebraic combinatorics.

\end{abstract}

\maketitle

\section{Introduction}
The study of infinite-dimensional Lie (super)algebras has long been an extensive area of research, particularly in the context of Borcherds-Kac-Moody (BKM) superalgebras, which are natural extensions of BKM Lie algebras and incorporate a $\mathbb Z_2$-grading, distinguishing between even and odd elements. These infinite-dimensional Lie superalgebras generalize both Kac-Moody superalgebras and Borcherds algebras by allowing imaginary simple roots and additional algebraic structures that arise in the superalgebraic setting. Their rich structure has led to numerous applications in mathematical physics, number theory, representation theory, and beyond (see, for example, \cite{Borcherds1992, Ray06, Scheithauerbook, Wa01} and references therein). 

They also play a key role in the study of vertex superalgebras, which are used to model the symmetries of superstring theory and supergravity \cite{Huang2020, Moriwaki2023, Xu98}. These algebras provide a natural framework for understanding extended supersymmetry in quantum field theory and integrable systems.
BKM superalgebras also exhibit deep connections with automorphic forms and supermodular invariants through their denominator identities \cite{Ray06}. These identities encode crucial structural information about the algebra, such as root multiplicities. 


The main motivation of this paper is to understand the root multiplicities encoded in the denominator identity of BKM  superalgebras and to provide explicit formulas for these multiplicities in terms of graph invariants associated with the underlying (quasi) Dynkin diagrams. These connections build on prior work linking the chromatic polynomial of a graph to the root multiplicities of the associated Kac-Moody and Borcherds algebras (see \cite{AKV18}, \cite{SA2024} and \cite{VV2015}).

The main ingredient in our study are partially commutative Lie superalgebras (PCLSA), which are related to BKM superalgebras in the following ways, for example:
\begin{itemize}
    \item Partially commutative Lie superalgebras can be realized as the positive part of BKM superalgebras when all simple roots are considered to be imaginary. Recall that the positive part of a BKM superalgebra is the subalgebra generated by the root spaces corresponding to positive roots.
\item The multiplicities of free roots of BKM superalgebras are related to the dimensions of the graded spaces of partially commutative Lie superalgebras, as explained in \cite{SA2024}.
\item The free Lie superalgebras naturally appear in the decomposition of BKM  superalgebras that are obtained from Lazard's elimination theorem (see \cite{Jurisich2004} and \cite[Proposition 3.6]{Kang1998}).  
\end{itemize}
Let $\mathcal{G}$ be a simple graph with a countable vertex set $I$ and a countable edge set $E$. Motivated by the theory of Lie superalgebras, we treat the graph $\mathcal{G}$ as a marked graph as follows: given $I^{\mathrm{it}}_1 \subseteq I_1 \subseteq I$, we define the elements of $I_1$, $I_0 := I \setminus I_1$, and $I^{\mathrm{it}}_1$ as the odd vertices, even vertices, and odd isotropic vertices of $\mathcal{G}$, respectively.

The partially commutative Lie superalgebra $\mathcal{P}(\mathcal{G})$ associated with a marked graph $\mathcal{G}$ and vertex set $I$ is defined as the quotient $\mathcal{P}(\mathcal{G}) = \mathcal{F}(X_I) / \mathcal{J}$, where $X_I$ is a $\mathbb{Z}_2$-graded set with
$(X_I)_1=\{e_i: i\in I_1\}$, $(X_I)_0=\{e_i: i\in I_0\}$ and $\mathcal{J}$ is the ideal in $\mathcal{F}(X_I)$ generated by 
    $$\left\{[e_i,e_j] : i\neq j,\ (i,j)\notin E\right\}\cup\left\{[e_i,e_i]: i\in I_1^{\mathrm{it}}\right\}.$$
Here, we denote by $\mathcal{F}(X_I)$ the free Lie superalgebra generated by $X_I$; see Section~\ref{defpcla} for more details. The last relation was not included in \cite{SA2024} in their definition of PCLSA, as it is not necessary for the study of free roots of BKM superalgebras. However, it is natural to include these relations, since they hold in BKM superalgebras. Moreover, including them will also enable us to consider a much larger class of roots of BKM superalgebras. More specifically, this will allow us to provide explicit formulas for the root multiplicities of all roots, which only have restrictions on the coefficients of real simple roots. For further details, see Section~\ref{BKMconnections}.

In a first step we provide a more direct and accessible combinatorial proof of the denominator identity of PCLAs, using ideas from heap theory introduced by Viennot (see \cite{Vi89}). This allows us to connect the marked multivariate independence series of the graph $\mathcal{G}$ with root multiplicities of PCLAs.  Indeed, we have (see Proposition~\ref{denomi})
    $$\mathcal{I}(\mathcal{G},-\mathbf{x})^{-1}=\frac{\prod_{\beta\in \Delta_1^{\mathcal{G}}}(1+x^\beta)^{\mathrm{mult}_{\mathcal{G}}(\beta)}}{\prod_{\beta\in \Delta_0^{\mathcal{G}}}(1-x^\beta)^{\mathrm{mult}_{\mathcal{G}}(\beta)}}$$
See Section~\ref{denominatoridentity} for the notations used in the above identity. One of the main notions of this paper is the introduction of marked multi-colorings and their associated polynomials, which generalize classical chromatic polynomials and offer a new tool for computing root multiplicities. For more details, see Section \ref{markedcolorings}. We have the following identity in terms of marked chromatic polynomials for any $q\in\mathbb{Z}$ (see Theorem~\ref{expmarkedindp}):
    $$\mathcal{I}(\mathcal{G},\mathbf{x})^q=\sum_{\mathbf{m}\in \mathbb{Z}_+^{I}} {_{\bold{m}}\Pi^{\mathrm{mark}}_\mathcal{G}(q)}\ x^\mathbf{m}.$$ 
In Section~\ref{markedcolorings}, we compute the marked chromatic polynomials for several families of marked graphs, such as star graphs and so-called PEO graphs (graphs with a perfect elimination ordering on their vertices; see Theorem \ref{peomarked}). The following result explicitly expresses the root multiplicities of PCLAs (and hence large classes of root multiplicites for BKM superalgebras as described above) in terms of marked chromatic polynomials (see Theorem~\ref{multroo1}):
$$ \mathrm{mult}_{\mathcal{G}}(\bold m) = \epsilon\big(\bold m)\sum\limits_{\ell | \bold m}\frac{\mu(\ell)}{\ell}\epsilon\big(\bold m/\ell\big)(-1)^{|\bold m|/\ell-1} {_{\frac{\bold{m}}{\ell}}\Pi^{\mathrm{mark}}_\mathcal{G}}(q)[q].$$
Thus, together with the explicit expression of marked chromatic polynomials (for example, for PEO graphs), this provides a closed formula for the root multiplicities. 

In conclusion, we present two applications of our results. The first (Proposition~\ref{rootdescp}) provides an explicit characterization of the roots of PCLAs. Although PCLSAs are closely related with BKM superalgebras, we cannot use the description of roots from Wakimoto's book \cite[Theorem 2.46, Page 120]{Wa01}, as there appear to be missing assumptions in the particular case of PCLAs. In contrast to partially commutative Lie algebras or the PCLAs described in \cite{SA2024}, the description of roots is more involved and does not depend solely on whether the support of an element in the root lattice is connected.

The second application (Theorem~\ref{racghilber}) reveals a connection between the universal enveloping algebras of PCLAs and right-angled Coxeter groups. This connection enables us to compute the Hilbert series of these groups explicitly in terms of the marked independence series. While the one-variable case of this result (i.e., the Poincaré series) is well-established in the literature \cite{Davis2015}, the multivariate generalization appears to be a new contribution. More precisely, for the Hilbert series $\mathcal{H}_{\mathcal{G}}(\bold x)$ of the right-angled Coxeter group $W_{\mathcal{G}}$, we have:
$$\mathcal{H}_{\mathcal{G}}(\bold x) = \sum\limits_{w\in W_{\mathcal{G}}} \bold x(w) = \frac{1}{I({\mathcal{G}},-\bold x)}$$
where $\bold x(w) = x_{i_1}\cdots x_{i_r}$ if $w = s_{i_1}\cdots s_{i_r}$ is a reduced expression of $w\in W_{\mathcal{G}}$.

\medskip
The paper is organized as follows: Section \ref{denominatoridentity} introduces the necessary notation, defines partially commutative Lie superalgebras and marked independence series, and establishes the denominator identities for these algebras. In Section~\ref{markedcolorings}, we introduce marked colorings and marked chromatic polynomials, exploring their connection to marked independence series. Additionally, we compute the marked chromatic polynomials for PEO-graphs. Section~\ref{rootmultisection} presents explicit formulas expressing root multiplicities in terms of marked chromatic polynomials and vice versa. In Section~\ref{rootssection}, we give explicit descriptions of the roots of PCLSAs.
Finally, in Section~\ref{racgsection}, we apply our results to compute the Hilbert series of right-angled Coxeter groups.

\medskip

\textit{Acknowledgement:}  D.K. thanks Minoru Wakimoto for valuable discussions on the root spaces of BKM superalgebras.  

\section{Lie superalgebras: Inversion lemma and denominator identity}\label{denominatoridentity}

\subsection{}
We denote by $\mathbb{N}, \mathbb{Z}_{+}$, and $\mathbb{Z}$ the set of positive integers, non-negative integers, and integers, respectively. Unless otherwise stated, all vector spaces are assumed to be complex vector spaces. For a set $S$, we denote by $P^{\mathrm{mult}}(S)$ the collection of all finite multi-subsets of $S$ and by $\mathbb{Z}^S$ the set of all finitely-supported $S$-tuples of integers where the support of a tuple $\bold{m}=(m_i:i\in S)$ is defined by $\mathrm{supp}(\bold{m})=\{i\in S : m_i\neq 0\}$. The corresponding analogues $\mathbb{Z}^S_+$ and $\mathbb{N}^S_{}$ respectively are defined similarly. For a multi-subset $U$, we denote by $|U|$ the cardinality of $U$; the elements are counted with multiplicity and given $\bold{m}=(m_i:i\in S)\in\mathbb{Z}^S_{+},$ we set $|\bold{m}|=\sum_{i\in S}m_i$.
\subsection{} Let $\mathbb{Z}_2 = \{0, 1\}$ by the group of integers modulo $2.$
In this section, we recall the basic theory of Lie superalgebra and define free partially commutative Lie superalgebras.
\begin{defn}
  A Lie superalgebra $\mathfrak{g}$ is a $\mathbb{Z}_2$-graded vector space $\mathfrak{g}=\mathfrak{g}_0\oplus\mathfrak{g}_1$ equipped with a Lie product operation $[-,-]:\mathfrak{g}\times\mathfrak{g}\rightarrow \mathfrak{g}$ satisfying the following conditions: for all homogeneous elements $x,y$ and $z$ we have
  \begin{enumerate}
      \item $[x,y]=-(-1)^{|x||y|}[y,x]$
      \item $[x,[y,z]]=[[x,y],z]+(-1)^{|x||y|}[y,[x,z]]$
  \end{enumerate}
  where $|\cdot|$ denotes the parity.
\end{defn}
The elements in $\mathfrak{g}_0$ (resp. $\mathfrak{g}_1$) are called even (resp. odd). From the definition, it follows that $\mathfrak{g}_0$ is a Lie algebra and $\mathfrak{g}_1$ is a module over $\mathfrak{g}_0$. Any associative superalgebra $A$ is naturally a Lie superalgebra with bracket 
\begin{equation}\label{asslie}[a,b]=ab-(-1)^{|a||b|}ba,\ \ a,b\in A\ \text{ homogeneous}.\end{equation}
\subsection{}
We recall the Poincaré-Birkoff-Witt theorem for Lie superalgebras. Let $\mathbf{T}(\mathfrak{g})$ the tensor superalgebra and let $\mathbf{J}$ the ideal of $\mathbf{T}(\mathfrak{g})$ generated by the elements of the form 
$$[x,y]-x\otimes y+(-1)^{|x||y|}y\otimes x,\ \ x,y\in\lie g,\text{ homogeneous}$$
The universal enveloping algebra $\mathbf{U}(\mathfrak{g})$ is an associative superalgebra defined as the quotient $\mathbf{T}(\mathfrak{g})/\mathbf{J}$. The PBW basis is obtained as follows (see for example \cite[Theorem 6.1.1]{Mu12}). Let $Z_0$ be a basis of $\mathfrak{g}_0$ and $Z_1$ be a basis of $\mathfrak{g}_1$ and fix a total order $\leq $ on $Z=Z_0\cup Z_1$. Then the set of monomials 
$$z_1z_2\cdots z_n,\ \ z_i\in Z,\ z_i\leq z_{i+1},\ \text{and }z_i<z_{i+1} \text{ if $z_i\in Z_1$}$$
forms a vector space basis of $\mathbf{U}(\mathfrak{g})$.

The main Lie superalgebra considered in this paper will be the partially commutative Lie superalgebra which we introduce next. We will give in this setting a direct combinatorial proof of the denominator identity for Borcherds-Kac-Moody Lie superalgebras \cite[Section 2.5]{Wa01} and apply this in the following sections to derive formulas for their root multiplicities in terms of marked chromatic polynomials and describe the set of roots explicitly.

\subsection{}\label{defpcla} Let $\mathcal{G}$ be a simple graph with countable vertex set $I$ and countable edge set $E$.  Recall that a graph with no multiple edges and loops is called simple. 
For any two vertices $i, j\in I$, we denote by $e(i, j)\in E$ the edge between $i$ and $j$ (if an edge exists). Motivated by the theory of Lie superalgebras, we treat the graph $\mathcal{G}$ as a marked graph as follows: given $I^{\mathrm{it}}_1\subseteq I_1\subseteq I$ we call the elements of $I_1$, $I_0:=I\backslash I_1$ and $I^{\mathrm{it}}_1$ the odd vertices, even vertices, and odd isotropic vertices of $\mathcal{G}$, respectively. In the following we will present the odd isotropic vertices by $\Circle$, and the other vertices are represented by $\CIRCLE$. 

For a marked graph $(\mathcal{G},I_1,I^{\mathrm{it}}_1)$, we can define the associated free partially commutative Lie superalgebra $\mathcal{P}(\mathcal{G})$ (we drop in the rest of the paper the dependence on the marking for simplicity) as follows. First, we will introduce the free Lie superalgebra on a $\mathbb{Z}_2$ graded set $X$.
\begin{defn}
    Let $X=X_0\cup X_1$ be a non-empty $\mathbb{Z}_2$ graded set and $X^*$ be the free monoid generated by $X$. A word $w\in X^*$ is called even (resp. odd) if the number of elements of $X_1$ appearing in $w$ is even (resp. odd). This defines the free associative superalgebra $F(X)$ on $X$, which becomes a Lie superalgebra with bracket as in \eqref{asslie}. The free Lie superalgebra $\mathcal{F}(X)$ of $X$ is defined as the Lie sub-superalgebra of $F(X)$ generated by $X$.
    \end{defn}
There are well-known bases for free Lie algebras, such as the Lyndon and Shirshov bases, which were constructed in \cite{LY54} and \cite{SH62}, respectively. A description of bases for free Lie superalgebras can be found in \cite{BMP92}, and more recently, bases have been constructed in \cite{ML24} using basic commutators.
The free partially commutative Lie superalgebra $\mathcal{P}(\mathcal{G})$ associated to a marked graph $\mathcal{G}$ with vertex set $I$ is defined as the quotient $\mathcal{P}(\mathcal{G})=\mathcal{F}(X_I)/\mathcal{J}$, where $(X_I)_1=\{e_i: i\in I_1\}$, $(X_I)_0=\{e_i: i\in I_0\}$ and $\mathcal{J}$ is the ideal in $\mathcal{F}(X_I)$ generated by 
    $$\left\{[e_i,e_j] : i\neq j,\ (i,j)\notin E\right\}\cup\left\{[e_i,e_i]: i\in I_1^{\mathrm{it}}\right\}.$$
We will later see that $\mathcal{P}(\mathcal{G})$ can be realized as the positive part of a Borcherds-Kac-Moody superalgebra.
\begin{rem}
The definition of free partially commutative Lie superalgebras defined in \cite{SA2024} is different from ours. The reason is that they are interested only in free roots of BKM superalgebras; these correspond to roots independent from the Serre relations and the coefficients of real simple roots and odd isotropic simple roots are at most one (see \cite[Definition 2.6]{SA2024}). In the situation of arbitrary root spaces one has to include the relation $[e_i,e_i]=0$ for $i\in I_1^{\mathrm{it}}.$ 
\end{rem}
\subsection{} In this subsection we define the marked multivariate independence series of marked graphs. Let $R=\mathbb{C}[[x_i : i\in I]]$ be the ring of formal power series generated by the countable set of commuting variables $\{x_i : i\in I\}$ over $\mathbb{C}$. For $\mathbf{m} = (m_i : i\in I)\in \mathbb{Z}_{+}^{I}$ and $f\in R$ we denote by $f[x^{\mathbf{m}}]$ the coefficient in front of $x^{\mathbf{m}}:=\prod_{i\in I} x_i^{m_i}$.

\begin{defn} We introduce the following notations.
\begin{enumerate}
    \item An element $U\in P^{\mathrm{mult}}(I)$ is called independent if the subgraph spanned by the underlying set of $U$ has no edges, i.e.,
    any two distinct vertices from $U$ have no edge between them.
\item We denote by $\mathcal{I}({\mathcal{G}})$ the set of all finite independent multi-subsets $U$ of $\mathcal{G}$, such that the vertices from $I\backslash I_1^{\mathrm{it}}$ appear at most once in $U$. We understand $\emptyset\in \mathcal{I}(\mathcal{G})$.
\item Given $U\in I(\mathcal{G})$, set $x(U) = \prod_{i\in I}x_i^{m_i}$, where $i\in I$ appears $m_i$ times in $U$.  
The marked multivariate independence series of the graph $\mathcal{G}$ is defined as
$$\mathcal{I}(\mathcal{G},\mathbf{x})=\sum_{U\in \mathcal{I}(\mathcal{G})} x(U).$$
\end{enumerate}
\end{defn}
Note that $I(\mathcal{G},\mathbf{x})^q\in R$ for all $q\in \mathbb Z$. 
\begin{example}\begin{enumerate}
    \item We consider the path graph $\mathcal{G}$ with $I_1^{\mathrm{it}}=\{4\}$: 
    \begin{figure}[h]
    \centering
    \begin{tikzpicture}
    \tikzstyle{B}=[circle,draw=black!80,fill=black!80,thick]
        \tikzstyle{C}=[circle,draw=black!80,thick]
    \node[B] (1) at (0,0) [label=below:$1$]{};
    \node[B] (2) at (2,0) [label=below:$2$]{};
    \node[B] (3) at (4,0) [label=below:$3$]{};
    \node[C] (4) at (6,0) [label=below:$4$]{};
    \path[-] (1) edge node[left]{} (2);
    \path[-] (2) edge node[left]{} (3);
    \path[-] (3) edge node[left]{} (4);
\end{tikzpicture}
    \label{}
\end{figure}

The marked multivariate independence series of this graph is $$\mathcal{I}(\mathcal{G},\mathbf{x})=1+x_1+x_2+x_3+\sum_{m=1}^\infty x^m_4+x_1x_3+\sum_{m=1}^\infty x_2 x^m_4+\sum_{m=1}^\infty x_1x^m_4.$$
\item The marked multivariate independence series of the marked graph $\mathcal{G}$ 
\begin{figure}[h]
\centering
\begin{subfigure}{0.49\textwidth}
\centering
    \begin{tikzpicture}
        \node (1) at (1,4) [circle, fill=black, thick, label=above:1] {};
    \node (2) at (3,4) [circle, fill=black,thick, label=above:2] {};
    \node (3) at (4,5) [circle, draw=black!80, thick, label=right:3] {};
    \node (4) at (4,3) [circle,draw=black!80, thick, label=right:4] {};

    \path[-] (1) edge node[left]{} (2);
    \path[-] (2) edge node[left]{} (3);
    \path[-] (2) edge node[left]{} (4);
    \path[-] (3) edge node[left]{} (4);
\end{tikzpicture}
\end{subfigure}
\end{figure}

is given by
$$\mathcal{I}(\mathcal{G},\mathbf{x})=1+x_1+x_2+\sum_{m=1}^{\infty}x_3^m+\sum_{m=1}^{\infty}x_4^m+\sum_{m=1}^{\infty}x_1x_3^m+\sum_{m=1}^{\infty}x_1x_4^m.$$
    \end{enumerate} 
\end{example}
\subsection{} Fix a total order on $I$ and consider the associated lexicographical order on the free monoid $I^*$. We consider the Cartier-Foata monoid
$\mathcal{M}(\mathcal{G})=I^*/\sim$ where $\sim$ is generated by the relations $ab\sim ba,\ (a,b)\notin E$. Note that $\mathcal{M}(\mathcal{G})$ has a natural $\mathbb{Z}_2$-gradation induced from the $\mathbb{Z}_2$-gradation of $I^*$. Given $[a]\in \mathcal{M}(\mathcal{G})$, let $a_{\mathrm{max}}\in I^{*}$ the unique maximal element in the equivalence class of $a$ with respect to the lexicographical order. We have a total order on $\mathcal{M}(\mathcal{G})$ given by $$[a]\leq [b]:\iff a_{\mathrm{max}}\leq b_{\mathrm{max}}.$$
Furthermore, given $w=i_1i_2\cdots i_r\in I^*$ we set as usual $|w|=r$, $i(w)=|\{j:i_j=i\}|$ for all $i\in I$ and $\mathrm{supp}(w)=\{i\in I: i(w)\neq 0\}$.  
 \begin{defn} Let $[w]\in \mathcal{M}(\mathcal{G})$. The initial multiplicity of $i\in I$ in $[w]$, denoted by $a_i(w)$, is defined to be the largest $k\geq 0$ for which there exists $u\in I^*$ with $[w]=[i^ku]$. The initial alphabet of $[w]$ is the multiset
$$\mathrm{IA}_m(w)=\left\{i^{a_i(w)}: i\in I\right\}$$
and the underlying set is denoted by $\mathrm{IA}(w)$. Similarly, we define the ending alphabet $\mathrm{EA}_m(w)$ and $\mathrm{EA}(w)$ respectively.
\end{defn} 
We set
$$\mathcal{M}'(\mathcal{G})=\left\{[w]\in \mathcal{M}(\mathcal{G}): \text{for all }  [w]=[w_1w_2], \ \text{we have } a_i(w_2)\leq 1,\  \forall  i\in I_1^{\mathrm{it}} \right\}$$
and define a monoid structure on $\mathcal{M}'(\mathcal{G})\cup\{0\}$ by
$$[w][w']=\begin{cases}
    [ww'],&\text{ if $\mathrm{EA}(w)\cap \mathrm{IA}(w')\cap I_1^{\mathrm{it}}=\emptyset$}\\
    0,&\text{ otherwise}
\end{cases}$$
and $[w]\cdot 0=0\cdot [w]=0$. It is straightforward to check that the empty word is the identity element and the product is associative.


For $w=i_1i_2\dots i_r\in I^*$  let $e_{w}=e_{i_1}e_{i_2}\cdots e_{i_r}$. Then we have the following basis. 
\begin{prop}\label{monoid basis}
Let $\mathcal{G}$ be a marked graph. Then 
\begin{equation}\label{ba23}\{e_{w_{\mathrm{max}}}: [w]\in \mathcal{M}'(\mathcal{G})\}\end{equation}
forms a basis for the universal enveloping algebra $\mathbf{U}_{\mathcal{G}}$ of $\mathcal{P}(\mathcal{G})$.
\end{prop}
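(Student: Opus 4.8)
The plan is to prove that the proposed set of monomials is a basis for $\mathbf{U}_{\mathcal{G}}$ by combining two ingredients: the PBW theorem recalled above and a careful analysis of the Cartier--Foata/heap combinatorics encoded in $\mathcal{M}'(\mathcal{G})$. First I would observe that $\mathbf{U}_{\mathcal{G}}$ is the associative superalgebra generated by $\{e_i : i\in I\}$ subject to the relations $e_ie_j - (-1)^{|e_i||e_j|}e_je_i = 0$ for $i\neq j$ with $(i,j)\notin E$ (coming from $[e_i,e_j]=0$), together with $e_i^2=0$ for $i\in I_1^{\mathrm{it}}$ (coming from $[e_i,e_i]=0$, since for an odd element $[e_i,e_i]=2e_i^2$). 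These are exactly the rewriting rules that define the monoid $\mathcal{M}'(\mathcal{G})\cup\{0\}$: the commutation relations identify words differing by a swap of non-adjacent letters, and the isotropic relation kills any word in which an odd isotropic letter appears twice consecutively (equivalently, sends the corresponding product to $0$).

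The core of the argument is a spanning-and-independence dichotomy. For spanning, I would argue that every product $e_{i_1}\cdots e_{i_r}$ can be rewritten, using the commutation and isotropic relations, either to $0$ or to the canonical representative $e_{w_{\mathrm{max}}}$ of its class $[w]\in\mathcal{M}'(\mathcal{G})$. Concretely, given a word $w$, one repeatedly applies the swap relations to bring it to its lexicographically maximal equivalent word $w_{\mathrm{max}}$; if at any stage two equal odd-isotropic letters become adjacent (equivalently, if $[w]\notin\mathcal{M}'(\mathcal{G})$), the isotropic relation forces the product to vanish. This shows \eqref{ba23} spans $\mathbf{U}_{\mathcal{G}}$. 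For linear independence, the natural route is to exhibit $\mathbf{U}_{\mathcal{G}}$ as a quotient of the free associative superalgebra and show the images of the claimed basis elements are independent; the cleanest way is to use the diamond/Bergman-style argument, verifying that the rewriting system given by the defining relations is confluent (all ambiguities resolve), so that irreducible words — precisely the $w_{\mathrm{max}}$ for $[w]\in\mathcal{M}'(\mathcal{G})$ — form a basis.

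Alternatively, and more in keeping with the Lie-theoretic framing of the paper, I would deduce independence from the PBW theorem applied to $\mathcal{P}(\mathcal{G})$. Here one uses a PBW basis of $\mathbf{U}_{\mathcal{G}}$ built from a homogeneous basis $Z=Z_0\cup Z_1$ of $\mathcal{P}(\mathcal{G})$, ordered so that ordered monomials in the $z$'s correspond bijectively to the heap representatives $[w]\in\mathcal{M}'(\mathcal{G})$. The key point is that the Lie superalgebra structure of $\mathcal{P}(\mathcal{G})$ is governed by the same heap combinatorics, so the two indexing sets match up and a dimension/cardinality count in each multidegree $\bold m\in\mathbb{Z}_+^I$ forces the spanning set to be a basis.

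I expect the main obstacle to be the linear independence, specifically handling the interaction between the isotropic relation $e_i^2=0$ and the sign conventions for odd elements when checking confluence of the rewriting system. The swaps of non-adjacent letters introduce signs $(-1)^{|e_i||e_j|}$, and one must verify that the overlap ambiguities — for instance when an isotropic reduction $e_i^2\mapsto 0$ overlaps with a commutation move involving $e_i$ — resolve consistently, in particular that no nonzero monomial is inadvertently sent to $0$ and that the signs accumulated along different reduction paths agree. Once this compatibility is established (it is essentially the assertion that $\mathcal{M}'(\mathcal{G})\cup\{0\}$ is a well-defined monoid, which the excerpt has already checked), the remaining steps are the routine spanning argument and the bookkeeping identifying irreducible words with $\{w_{\mathrm{max}} : [w]\in\mathcal{M}'(\mathcal{G})\}$.
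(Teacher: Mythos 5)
Your main route is correct and, at bottom, it is the same underlying argument as the paper's, but you execute the independence step differently, and the comparison is instructive. The paper's proof gets spanning exactly as you do (rewrite $e_w$ to $\pm e_{w_{\mathrm{max}}}$, or to $0$ when $[w]\notin\mathcal{M}'(\mathcal{G})$), and then, instead of presenting $\mathbf{U}_{\mathcal{G}}$ by generators and relations, it introduces the auxiliary algebra $\hat{\mathbb{C}}=\mathbb{C}\langle z_i: i\in I\rangle/\mathcal{Z}$ with $\mathcal{Z}$ generated by $z_i^2$ ($i\in I_1^{\mathrm{it}}$) and $z_iz_j-(-1)^{|i||j|}z_jz_i$ ($i\neq j$, $(i,j)\notin E$), and uses only the universal property to obtain a homomorphism $\mathbf{U}_{\mathcal{G}}\rightarrow\hat{\mathbb{C}}$; linear independence of the images of the $e_{w_{\mathrm{max}}}$ in $\hat{\mathbb{C}}$ then pulls back. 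This one-directional trick spares the paper your opening claim that $\mathbf{U}_{\mathcal{G}}$ \emph{is} the algebra with that presentation -- a true statement, but one that requires the standard facts $\mathbf{U}(\mathcal{F}(X))\cong F(X)$ and $\mathbf{U}(\mathcal{F}(X)/\mathcal{J})\cong \mathbf{U}(\mathcal{F}(X))/\langle\mathcal{J}\rangle$, which you should cite rather than treat as an observation. On the other hand, the paper simply asserts that the canonical monomials are independent in $\hat{\mathbb{C}}$, and that assertion is precisely a normal-form theorem for the presented algebra; your Bergman/diamond-lemma confluence check (including the sign bookkeeping and the overlaps between $e_i^2\mapsto 0$ and the super-commutation moves) is exactly the content that the paper leaves implicit. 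So your first route is, if anything, a more complete version of the same proof. One caveat: confluence is slightly more than the well-definedness of the monoid $\mathcal{M}'(\mathcal{G})\cup\{0\}$, since the monoid sees no signs; the sign-consistency of different reduction paths is an additional (easy but genuine) verification.

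Your second, ``more Lie-theoretic'' alternative has a real gap: it is circular as stated. To run a PBW dimension count in each multidegree $\bold m\in\mathbb{Z}_+^I$ you would need to know $\dim\mathcal{P}(\mathcal{G})_{\bold m}$, i.e.\ that the Lie superalgebra structure of $\mathcal{P}(\mathcal{G})$ ``is governed by the same heap combinatorics'' -- but these graded dimensions are exactly what is unknown at this point in the paper; they are computed \emph{afterwards} (via Proposition~\ref{denomi} and Theorem~\ref{multroo1}), using the present proposition as input. PBW gives you a basis of $\mathbf{U}_{\mathcal{G}}$ in terms of an unknown basis of $\mathcal{P}(\mathcal{G})$, and no cardinality matching with $\mathcal{M}'(\mathcal{G})$ is available without first proving something equivalent to the statement being proved. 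Drop that route and keep the diamond-lemma one.
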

\begin{proof} Clearly, the universal enveloping algebra is spanned by all monomials $e_w$ with $w\in I^*$ and if $[w]=[u]$, for some $w,u\in I^*$, then $e_w=\pm e_u$. Thus it is enough to choose one element of each equivalence class and we make the choice $\{e_{w_{\mathrm{max}}}: [w]\in \mathcal{M}(\mathcal{G})\}$. Further, if $[w]\notin \mathcal{M}'(\mathcal{G})$, then $e_{w_{\mathrm{max}}}=0$ by the relation $[e_i,e_i]=0$ in $\mathcal{P}(\mathcal{G})$ for all $i\in I_1^{\mathrm{it}}.$ So \eqref{ba23} gives a spanning set. 
Now consider the $\mathbb{C}$-algebra $$\hat{\mathbb{C}}=\mathbb{C}\langle z_i: i\in I\rangle/\mathcal{Z}$$
where $\mathcal{Z}$ is the two-sided ideal generated by $z_i^2, i\in I_1^{\mathrm{it}}$ and $z_iz_j-(-1)^{|i||j|}z_jz_i,\ i\neq j,\ (i,j)\notin E$. Then, $e_i\mapsto z_i$ extends to a linear map $\mathcal{P}(\mathcal{G})\rightarrow \hat{\mathbb{C}}$ and hence by the universal property an algebra homomorphism $\mathbf{U}_{\mathcal{G}}\rightarrow \hat{\mathbb{C}}$. Since, the image of the above set is linearly independent we get the claim.
\end{proof}

\begin{rem} The monoid $\mathcal{M}(\mathcal{G})$ can be identified with the monoid of heaps of pieces introduced by Viennot \cite{Vi89}. Here a pre-heap $H$ is a finite subset of $I \times \{0,1,2, \dots \}$ such that, if $(i,m),(j,n) \in H$ with $i \approx j$, then $m \ne n$, where $i \approx j$ means $i=j$ or $(i,j)\in E$. Each element is called a basic piece and for $(i,m)\in H$ we call $i$ the position and $m$ the level. Each pre-heap $H$ defines a partial order by taking the transitive closure of 
$$(i,m) \le_H (j,n) \ \text{if $i \approx j$ and }  m < n.$$

A heap $H$ is a pre-heap such that: if $(i,m) \in H$ with $m > 0$ then there exists $(j,m-1) \in H$ such that $i \approx j$. The set of all heaps is denoted by $\mathcal{H}(I,\approx)$. It is known that each pre heap has a unique heap in its isomorhism class (it will be the pre heap whose sum over its levels is minimal); here an isomorphism is a position and order preserving bijection.

The set $\mathcal{H}(I,\approx)$ is a monoid with a product called the superposition of heaps and is denoted by $H\circ H'$ (see \cite{Vi89} for details). The map $[i_1\cdots i_r]\mapsto (i_1,0)\circ \cdots \circ (i_r,0)$ defines an isomorphism of monoids $\Phi:\mathcal{M}(\mathcal{G})\rightarrow \mathcal{H}(I,\approx)$. Similarly, by setting
$\mathcal{H}'(I,\approx)=\{\Phi([w]):w\in M'(\mathcal{G})\}$ and defining the modified multiplication on $\mathcal{H}'(I,\approx)\cup \{0\}$ as follows
\begin{equation}
    H\cdot H'= \begin{cases}
    H\circ H',&\ \text{if } \max(H)\cap\min(H')\cap I^\mathrm{it}_1=\emptyset\\
    0,&\   \text{if } \max(H)\cap\min(H')\cap I^\mathrm{it}_1\neq\emptyset
\end{cases},\ \ H,H'\in \mathcal{H}'(I,\approx)
 \end{equation}
(and $0\cdot H=H\cdot 0=0$) we get an isomorphism of monoids $\mathcal{M}'(\mathcal{G})\cup\{0\}\rightarrow \mathcal{H}'(I,\approx)\cup\{0\}$.
Here $\min{(H)}$ (resp. $\max{(H)}$) is the set of minimal (resp. maximal) pieces of $H$ with respect to the partial order $\le_H$. 

We call $\mathcal{H}'(I,\approx)\cup\{0\}$ the super heaps monoid; so all results of this paper could in principle be translated to the language of super heaps as in \cite{SA2024}.
\end{rem}

\subsection{}
For a subset $K$ of $I$, let $\mathcal{M}'_K(\mathcal{G})$ be the set of all elements $[w]$ in $\mathcal{M}'(\mathcal{G})$ such that $[w]$ can only end with a vertex from $K$ (the empty word is allowed). Set
$$x(w)=\prod_{j=1}^rx_{i_j}\in R,\ \ [w]=[i_1\cdots i_r]\in \mathcal{M}(\mathcal{G}).$$
The so-called inversion lemma is a fundamental result of Viennot \cite{Vi89} that provides a closed formula for the generating function of heaps, with all maximal pieces contained in some fixed subset (see also \cite[Lemma 3.2]{BJN19}). The monoid of heaps with pieces in the vertex set of some graph is equivalent to the Cartier-Foata monoid associated with that graph. Similar ideas from Viennot’s proof lead to the analogue of the inversion lemma for $\mathcal{M}'_K(\mathcal{G})$, stated as follows. 
\begin{lem}\label{invlem} Let $\mathcal{G}$ be a marked graph with vertex set $I$ and $K\subseteq I$. The generating function for the words in $\mathcal{M}'_K(\mathcal{G})$ is given by 
\begin{equation}\label{eq: inversion lemma}
            \sum_{[w]\in \mathcal{M}'_K(\mathcal{G})}x(w)=\frac{\sum_{U\in{\mathcal{I}(\mathcal{G}(I\setminus K))}}(-1)^{|U|}x(U)}{\sum_{U\in{\mathcal{I}(\mathcal{G})}}(-1)^{|U|}x(U)}
\end{equation}
  In particular, if $K=I$, the right hand side equals $\mathcal{I}(\mathcal{G},-\mathbf{x})^{-1}$.   
\end{lem}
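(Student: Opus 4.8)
The plan is to clear the denominator and instead establish the equivalent identity
\begin{equation}\label{eq:plan}
\Big(\sum_{[w]\in\mathcal{M}'_K(\mathcal{G})}x(w)\Big)\Big(\sum_{U\in\mathcal{I}(\mathcal{G})}(-1)^{|U|}x(U)\Big)=\sum_{U\in\mathcal{I}(\mathcal{G}(I\setminus K))}(-1)^{|U|}x(U).
\end{equation}
The final assertion is then immediate: for $K=I$ the induced subgraph $\mathcal{G}(I\setminus K)$ is empty, so the right-hand side is $1$, while $\sum_{U\in\mathcal{I}(\mathcal{G})}(-1)^{|U|}x(U)=\mathcal{I}(\mathcal{G},-\mathbf{x})$ because $x(U)$ has total degree $|U|$. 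To prove \eqref{eq:plan} I would pass through the heap isomorphism $\Phi$ and read the left-hand side as a signed sum over pairs $(U,[w])$ with $U\in\mathcal{I}(\mathcal{G})$ and $[w]\in\mathcal{M}'_K(\mathcal{G})$, each contributing $(-1)^{|U|}x(U)x(w)$. I then group these pairs according to the single heap $E=\Phi([U])\circ\Phi([w])$ obtained by letting the heap of $w$ fall onto the (independent) heap of $U$; note $x(E)=x(U)x(w)$.

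For a fixed heap $E$, the pairs mapping to it are exactly the factorizations $E=\Phi([U])\circ\Phi([w])$, which correspond to the order ideals (down-sets) $D$ of $E$ for which the sub-heap on $D$ is an independent multiset, so that it represents an element of $\mathcal{I}(\mathcal{G})$, and the complementary up-set $E\setminus D$ represents an element of $\mathcal{M}'_K(\mathcal{G})$, i.e.\ carries no two adjacent pieces at an isotropic position and has all maximal pieces in $K$. Writing $|D|$ for the number of pieces in $D$, the contribution of $E$ to the left-hand side of \eqref{eq:plan} is $x(E)\sum_{D}(-1)^{|D|}$, the sum running over all such admissible ideals. It therefore remains to show that $\sum_{D}(-1)^{|D|}$ equals $(-1)^{|E|}$ when $E\in\mathcal{I}(\mathcal{G}(I\setminus K))$ and vanishes otherwise.

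In the first case $E$ is an independent multiset all of whose positions lie outside $K$; then $\max(E\setminus D)\subseteq K$ forces $E\setminus D$ to be empty, so $D=E$ is the only admissible ideal and the sum is $(-1)^{|E|}$, matching the right-hand side. In the remaining case I would exhibit a fixed piece $t=t(E)$ whose membership in $D$ can always be toggled without leaving the admissible set, so that $D\mapsto D\triangle\{t\}$ is a sign-reversing involution and $\sum_D(-1)^{|D|}=0$. For ordinary (non-isotropic) vertices this reduces to the classical situation, where $t$ may be taken to be a minimal piece of $E$ that is either non-maximal or seated at a vertex of $K$, and toggling it preserves admissibility by the elementary fact that removing a minimal piece cannot create a new maximal piece.

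The construction of $t(E)$ is the crux, and this is where the superalgebra genuinely enters. The isotropic vertices break the classical symmetry: since $\mathcal{I}(\mathcal{G})$ allows an isotropic vertex to repeat while $\mathcal{M}'_K(\mathcal{G})$ forbids two isotropic pieces from becoming adjacent (this is the relation $[e_i,e_i]=0$), the free piece must sometimes be sought at the \emph{top} of an isotropic stack whose vertex lies in $K$, rather than at the bottom. The main obstacle is thus to organize these competing demands into a single well-defined rule for $t(E)$ — for instance by scanning the positions of $E$ in the fixed total order on $I$ and selecting the first one that exposes either a forbidden isotropic adjacency or a maximal $K$-piece at the appropriate end — and to verify that the resulting toggle simultaneously preserves independence of the bottom part, membership of the top part in $\mathcal{M}'_K(\mathcal{G})$, and the condition that all its maximal pieces lie in $K$.
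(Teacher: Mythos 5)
Your reduction to the product identity and the $K=I$ specialization are correct, and your overall plan --- cancel the signed sum over pairs $(U,[w])\in\mathcal{I}(\mathcal{G})\times\mathcal{M}'_K(\mathcal{G})$ by a sign-reversing involution --- is exactly the strategy of the paper's proof. But the proposal stops where the proof actually begins: the involution is never constructed. You yourself label the construction of the toggle piece ``the crux'' and ``the main obstacle,'' and everything done before that point (grouping pairs by the superposed heap $E$ and identifying factorizations with order ideals of $E$) is weight-preserving bookkeeping that neither helps nor hurts. The statement that carries all the content --- for every heap $E$ that is not an independent multiset supported outside $K$, the signed sum over admissible ideals vanishes --- is asserted, not proved. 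So there is a genuine gap.

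Two concrete reasons why this gap is substantial. First, the rule you gesture at --- a single piece $t(E)$ depending only on $E$ whose membership can be toggled in \emph{every} admissible ideal $D$ --- is a stronger requirement than a working argument needs, and it is not how the paper's involution operates: there the toggled vertex is the minimal element $i_{\mathrm{min}}$ of a set $L(U,[w])=A_{[w]}(U)\cup(U\cap K)\cup\{i\in U:(i,w)\in E\}$ that depends on the factorization $(U,[w])$ itself, and the involution property is then a nontrivial claim, proved by checking that after the toggle the minimal element of the new $L$-set is again $i_{\mathrm{min}}$. Second, the isotropic subtleties you correctly identify as the heart of the matter require genuine case analysis, which your scanning heuristic does not supply: when moving $i_{\mathrm{min}}\in U$ with $i_{\mathrm{min}}\in I_1^{\mathrm{it}}$ to the front of $w$, one must rule out $i_{\mathrm{min}}\in\mathrm{IA}(w)$, or else the new word leaves $\mathcal{M}'_K(\mathcal{G})$; and when absorbing an initial letter of $w$ into $U$, one must rule out that this produces a repeated isotropic vertex that already lay at the front of $w$ twice, which would contradict $[w]\in\mathcal{M}'(\mathcal{G})$. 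These verifications, together with the exact characterization that no toggleable vertex exists if and only if $U\in\mathcal{I}(\mathcal{G}(I\setminus K))$ and $w$ is the empty word (your first case establishes only one direction of this equivalence), constitute essentially the entire proof in the paper; your proposal leaves all of them open.
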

\begin{proof}
        Note that \ref{eq: inversion lemma} is equivalent to the following identity
        \begin{equation}\label{inversion}
            \left( \sum_{[w]\in \mathcal{M}'_K(\mathcal{G})}x(w)\right)\left( \sum_{U\in{\mathcal{I}(\mathcal{G})}}(-1)^{|U|}x(U)\right)={\sum_{U\in{\mathcal{I}(\mathcal{G}(I\setminus K))}}(-1)^{|U|}x(U)}.
        \end{equation}
We first define a few sets for a given element $[w]=[i_1i_2\cdots i_r]\in \mathcal{M}'_K(\mathcal{G})$ and $U\in \mathcal{I}(\mathcal{G})$. Recall that $\mathrm{EA}(w)\subseteq K$ denotes the set of all $i\in I$ such that $[w]=[w'i]$ for some $w'\in I^*$. Moreover, let 
$$A_{[w]}(U)=\{i\in \mathrm{IA}(w): \{i\}\cup U\in \mathcal{I}(\mathcal{G})\},$$
$$L(U,[w])=A_{[w]}(U)\cup (U\cap K)\cup \{i\in U: (i,w)\in E\}\subseteq I$$
where $(i,w)\in E$ means by definition that $(i,i_s)\in E$ for some $s\in\{1,\dots,r\}$. 
If $L(U,[w])=\emptyset$, we have $U \in \mathcal{I}(\mathcal{G}(I\setminus K))$. Moreover, $w$ must be the empty word since otherwise we have $\{i_1\}\cup U \notin \mathcal{I}(\mathcal{G})$ which means either $i_1\in U$ or $i_1\notin U$ and $i_1$ is connected to a node in $U$ in the graph $\mathcal{G}$ which is impossible. However, $i_1\in U$ would give $i_1\in \mathrm{EA}(w)\subseteq K$ which is again impossible. Hence we obtain 
\begin{equation}\label{well}L(U,[w])=\emptyset \iff U \in \mathcal{I}(\mathcal{G}(I\setminus K))\ \text{and } w=1\end{equation}
and set
$$M_V=\{(U,[w])\in \mathcal{I}(G) \times \mathcal{M}'_K(\mathcal{G}): L(U,[w])\neq \emptyset\}.$$
If $L(U,[w])\neq \emptyset$, we denote by $i_{\mathrm{min}}\in L(U,[w])$ its minimal vertex 
and define a map $\sigma:M_V\rightarrow M_V$ as follows:
$$\sigma((U,w))=\begin{cases} (U\cup \{i_{\mathrm{min}}\},[w']),& \text{if $i_{\mathrm{min}}\in A_{[w]}(U)$}\\
(U\setminus \{i_{\mathrm{min}}\},[i_{\mathrm{min}}w]),& \text{if $i_{\mathrm{min}}\notin A_{[w]}(U)$}
\end{cases}$$
where $[w]=[i_{\mathrm{min}}w']$ in the first case. The map is well-defined for the following reasons. 
\begin{itemize}
    \item If $i_{\mathrm{min}}\in A_{[w]}(U)$, then $\{i_{\mathrm{min}}\}\cup U\in \mathcal{I}(\mathcal{G})$ and clearly $[w']\in \mathcal{M}'_K(\mathcal{G})$. Further, if $w'=1$ we have  $i_{\mathrm{min}}\in K$ and by \eqref{well} the tuple lies in $M_V$. \vspace{0,1cm} 
    \item In the second case,  it will be enough to check that $i_{\mathrm{min}}\notin \mathrm{IA}(w)$ if $i_{\mathrm{min}}\in I_1^{\mathrm{it}}$ to guarantee that $[i_{\mathrm{min}}w]$ lies in $\mathcal{M}'_K(\mathcal{G})$. However, if we would have $i_{\mathrm{min}}\in \mathrm{IA}(w)$, then property $\{i_{\mathrm{min}}\}\cup U\notin \mathcal{I}(\mathcal{G})$ gives $i_{\mathrm{min}}\notin U$ which is absurd.
\end{itemize}
\textit{Claim:} The map $\sigma$ is an involution.

\textit{Proof of the Claim:}
Suppose first $i_{\mathrm{min}}\in A_{[w]}(U)$. We have to show that each element of $A_{[w']}(U\cup \{i_{\mathrm{min}}\})$ is strictly greater than $i_{\mathrm{min}}$ and each element of the complement of $A_{[w']}(U\cup \{i_{\mathrm{min}}\})$ in $L(U\cup \{i_{\mathrm{min}}\},[w'])$ is greater or equal to $i_{\mathrm{min}}$. This is enough since $\mathrm{EA}(w)\subseteq K$ implies $i_{\mathrm{min}}\in L(U\cup \{i_{\mathrm{min}}\},[w'])$.
\begin{itemize}
    \item If there is $j\in A_{[w']}(U\cup \{i_{\mathrm{min}}\})$, then $(j,i_{\mathrm{min}})\notin E$ and hence $j\in A_{[w]}(U)$. By the minimality we get $i_{\mathrm{min}}\leq j$. Assume that $i_{\mathrm{min}}=j$, then $\{i_{\mathrm{min}}^2\}\cup U\in \mathcal{I}(\mathcal{G})$ and hence $i_{\mathrm{min}}\in I_1^{\mathrm{it}}$. This contradicts $[w]\in \mathcal{M}'(\mathcal{G})$ and every element of $A_{[w']}(U\cup \{i_{\mathrm{min}}\})$ is in fact strictly greater than $i_{\mathrm{min}}$. \vspace{0,1cm}
    \item Now choose $j\in U \cup \{i_{\mathrm{min}}\} $ in the complement. Then we have either $j\in K$ or $(j,w')\in E$. Both cases imply, again with the minimality of $i_{\mathrm{min}}$, that $i_{\mathrm{min}}\leq j$.
\end{itemize}
Now assume $i_{\mathrm{min}}\notin A_{[w]}(U)$. In this case we have to show that each element of the set $L(U\setminus \{i_{\mathrm{min}}\},[i_{\mathrm{min}}w])$ is greater or equal to $i_{\mathrm{min}}$. This is enough since $i_{\mathrm{min}}\in A_{[i_{\mathrm{min}}w]}(U\setminus \{i_{\mathrm{min}}\})$.
\begin{itemize}
    \item If $j\in A_{[i_{\mathrm{min}}w]}(U\setminus \{i_{\mathrm{min}}\})$, then $j=i_{\mathrm{min}}$ or $U\cup \{j\}\in \mathcal{I}(\mathcal{G})$ which gives $j\in A_{[w]}(U)$ and the minimality gives the claim. \vspace{0,1cm}
    \item If $j\in U\setminus \{i_{\mathrm{min}}\}$ lies in the complement we either have $j\in K$ or $(j,i_{\mathrm{min}}w')\in E$. In the first case we are done and also in the second case provided $(j,w')\in E$. However, if $(j,i_{\mathrm{min}})\in E$ we get a contradiction to $j,i_{\mathrm{min}}\in U$ and $U\in \mathcal{I}(\mathcal{G})$.
\end{itemize}
This proves the claim. Hence $\sigma$ is an involution from $M_V$ to $M_V$ and all $(U,[w])\in M_V$ in the summation \ref{inversion} cancel each other. The only remaining summands correspond to $(U,[w])$ for which $L(U,[w])=\emptyset$, thus the result follows from \eqref{well}.
\end{proof}
\subsection{}\label{subsect28} The partially commutative Lie superalgebra $\mathcal{P}(\mathcal{G})$ is $\mathbb{Z}_+^{I}$-graded (compatible with the $\mathbb{Z}_2$ grading) with finite-dimensional graded pieces, where the degree of $e_i\in X_I$ is the $i$-th unit vector $\alpha_i$. This induces a $\mathbb{Z}_+^{I}$-grading on $\mathbf{U}_{\mathcal{G}}=\mathbf{U}(\mathcal{P}(\mathcal{G}))$ in the obvious way and the Hilbert series is defined by
 $$\mathcal{H}_{\mathbf{U}_\mathcal{G}}(\mathbf{x})=\sum_{\mu\in \mathbb{Z}_+^{I}}\dim{\mathbf{U}(\mathcal{P}(\mathcal{G}))_\mu} \ x^{\mu}.$$
 
Set 
$$\mathrm{mult}_{\mathcal{G}}(\beta)=\dim \mathcal{P}(\mathcal{G})_{\beta},\ \ \Delta^{\mathcal{G}}=\{\beta\in \mathbb{Z}_+^{I}: \mathrm{mult}_{\mathcal{G}}(\beta)\neq 0\}=\Delta_0^{\mathcal{G}}\cup \Delta_1^{\mathcal{G}}$$
where $\Delta_j^{\mathcal{G}}$ is the subset of all $\beta\in \mathbb{Z}_+^{I}$ satisfying $\sum_{i\in I_1} \beta_i\equiv j \mod 2$. 
The set $\Delta^{\mathcal{G}}$ is called the set of roots. Choose a basis $\{x_{r,\beta}: 1\leq r\leq k_{\beta}\}$ of $\mathcal{P}(\mathcal{G})_{\beta}$ for all $\beta\in \Delta^{\mathcal{G}}$ and an ordering on the set $\Delta^{\mathcal{G}}=\{\beta_1,\beta_2,\dots\}$. Consider the following ordered basis of $\mathcal{P}(\mathcal{G})$:
$$\{x_{1,\beta_1},x_{2,\beta_1},\dots,x_{k_{\beta_1},\beta_1},x_{1,\beta_2},x_{2,\beta_2},\dots\}$$ 
The following expression is a simple consequence of the PBW Theorem: 
\begin{align*}\mathcal{H}_{\mathbf{U}_\mathcal{G}}(\mathbf{x})&=\prod_{\beta\in \Delta_1^{\mathcal{G}}}(1+x^\beta)^{\mathrm{mult}_{\mathcal{G}}(\beta)}\prod_{\beta\in \Delta_0^{\mathcal{G}}}(1+x^\beta+x^{2\beta}+\cdots)^{\mathrm{mult}_{\mathcal{G}}(\beta)}&\\&
=\frac{\prod_{\beta\in \Delta_1^{\mathcal{G}}}(1+x^\beta)^{\mathrm{mult}_{\mathcal{G}}(\beta)}}{\prod_{\beta\in \Delta_0^{\mathcal{G}}}(1-x^\beta)^{\mathrm{mult}_{\mathcal{G}}(\beta)}}   \end{align*}

The next proposition is derived from the above discussion, Lemma~\ref{invlem} and Proposition~\ref{monoid basis} which is the denominator identity for partially commutative Lie superalgebras. Recall that it can be also derived using certain identifications from \cite[Section 2.5]{Wa01}; here we presented a direct combinatorial approach.
\begin{prop}\label{denomi}
    We have 
    $$\mathcal{I}(\mathcal{G},-\mathbf{x})^{-1}=\frac{\prod_{\beta\in \Delta_1^{\mathcal{G}}}(1+x^\beta)^{\mathrm{mult}_{\mathcal{G}}(\beta)}}{\prod_{\beta\in \Delta_0^{\mathcal{G}}}(1-x^\beta)^{\mathrm{mult}_{\mathcal{G}}(\beta)}}$$
    \qed
\end{prop}
This will be used later to give a formula for $\mathrm{mult}_{\mathcal{G}}(\beta)$ and also to determine $\Delta^{\mathcal{G}}$.
\section{Marked multi-colorings and marked chromatic polynomials}\label{markedcolorings}
\subsection{}
Throughout this subsection let $\bold{m}:=(m_i:i\in I)$ be a tuple of non-negative integers with finite support.
\begin{defn} Given $q\in\mathbb{N}$ we define the following.
\begin{enumerate}
        \item We call a map $_{\bold{m}}\Gamma^{\mathrm{mark}}_\mathcal{G}:I\rightarrow P^{\mathrm{mult}}(\{1,\dots,q\})$ a marked multi-coloring of $\mathcal{G}$ associated to $\bold{m}$ using at most $q$-colors if the following conditions are satisfied:\vspace{0,1cm}
        
        \begin{enumerate}[(i)]
            \item $_{\bold{m}}\Gamma^{\mathrm{mark}}_\mathcal{G}(i)$ is a subset of $\{1,\dots,q\}$ for all $i\in I\backslash I_1^{\mathrm{it}}$,\vspace{0,1cm}
            \item for all $i\in I$ we have $|_{\bold{m}}\Gamma^{\mathrm{mark}}_\mathcal{G}(i)|=m_i$,\vspace{0,1cm}
            \item for all $i,j\in I$ such that $(i,j)\in E$, we have $_{\bold{m}}\Gamma^{\mathrm{mark}}_\mathcal{G}(i)\cap {_{\bold{m}}\Gamma^{\mathrm{mark}}_\mathcal{G}(j)}=\emptyset$.
            
        \end{enumerate}
       \vspace{0,1cm}
       
        \item The number of marked multi-colorings of $\mathcal{G}$ associated to $\bold{m}$  using at most $q$-colors 
        is denoted by $_{\bold{m}}\Pi^{\mathrm{mark}}_\mathcal{G}(q)$.
    \end{enumerate}
\end{defn}
As a first step, we will show as for ordinary chromatic polynomials that $_{\bold{m}}\Pi^{\mathrm{mark}}_\mathcal{G}(q)$ is a polynomial in $q$; in the ordinary case this follows from a recurrence relation called the deletion–contraction recurrence; see \cite{B93} for an overview of the classical results. This will allow to give a meaning for $_{\bold{m}}\Pi^{\mathrm{mark}}_\mathcal{G}(q)$ for any integer $q\in \mathbb{Z}$.
We denote by $P_k(\bold{m},\mathcal{G})$ the set of all ordered $k$-tuples $(P_1,\dots,P_k)$ satisfying:
\begin{enumerate}
\item $P_r$ is a non-empty multi-set and $P_r\in \mathcal{I}({\mathcal{G}})$ for all $1\leq k\leq r$ \medskip
    \item the disjoint union (as multisets) of $P_1,\dots, P_k$ is equal to the multiset $\{i^{m_i} : i\in\mathrm{supp}(\bold{m})\},$ i.e., $i$ appears exactly $m_i$ times for each $i\in \mathrm{supp}(\bold{m})$.
\end{enumerate}
Since $\mathrm{supp}(\bold{m})$ is finite, $P_k(\bold{m},\mathcal{G})$ is also finite. The following is the analogue result of \cite[Section 3.3]{AKV18} for marked multi-colorings.

\begin{prop}\label{props6}
For each $q\in \mathbb{N}$ and $\mathbf{m}\in\mathbb{Z}_+^I$ with non-empty support, we have
    \begin{equation}
    \label{equation1}
    _{\bold{m}}\Pi^{\mathrm{mark}}_\mathcal{G}(q)=\sum_{k\geq 1}| P_k(\mathbf{m},\mathcal{G})| \binom{q}{k}.
\end{equation} In particular, the co-efficient of $q$ in $_{\bold{m}}\Pi^{\mathrm{mark}}_\mathcal{G}(q)$ is $\sum_{k\geq 1}\frac{(-1)^{k-1}}{k!} |P_k(\mathbf{m},\mathcal{G})|.$
\end{prop}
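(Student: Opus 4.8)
The plan is to prove \eqref{equation1} by a bijective count, grouping all marked multi-colorings according to the set of colors they actually use; this is the natural superalgebra analogue of the argument in \cite[Section 3.3]{AKV18}. Fix $q\in\mathbb N$ and $\bold m$ with non-empty finite support. Given a marked multi-coloring ${}_{\bold m}\Gamma^{\mathrm{mark}}_{\mathcal G}$ with colors in $\{1,\dots,q\}$, I would first record its \emph{color support} $S=\{c\in\{1,\dots,q\}: c \text{ occurs in } {}_{\bold m}\Gamma^{\mathrm{mark}}_{\mathcal G}(i) \text{ for some } i\in I\}$; since $\bold m\neq 0$ and $|\bold m|<\infty$, we have $1\le |S|\le |\bold m|$.

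The crucial step is to read off, for each color $c\in S$, its \emph{color class}, namely the multiset $P^{(c)}$ in which $i\in I$ occurs with the multiplicity of $c$ in ${}_{\bold m}\Gamma^{\mathrm{mark}}_{\mathcal G}(i)$, and to verify that $P^{(c)}\in\mathcal I(\mathcal G)$. Condition (iii) forces the underlying set of $P^{(c)}$ to be independent, since two adjacent vertices receive disjoint color multisets and hence cannot both contain $c$; condition (i) forces every vertex of $I\setminus I_1^{\mathrm{it}}$ to contribute $c$ at most once, while an isotropic vertex may contribute $c$ several times. Thus each $P^{(c)}$ is a non-empty independent multiset in which non-isotropic vertices appear at most once, i.e.\ an element of $\mathcal I(\mathcal G)$. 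Conversely, prescribing the color classes determines ${}_{\bold m}\Gamma^{\mathrm{mark}}_{\mathcal G}$ uniquely, and condition (ii) is precisely the requirement that the multiplicities contributed by all classes sum to $m_i$ at each vertex $i$.

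Assembling this, I would show that the marked multi-colorings with $|S|=k$ are in bijection with pairs consisting of a $k$-element subset $\{c_1<\dots<c_k\}\subseteq\{1,\dots,q\}$ together with the ordered tuple $(P^{(c_1)},\dots,P^{(c_k)})\in P_k(\bold m,\mathcal G)$; non-emptiness of each $P^{(c_r)}$ records that $c_r$ is genuinely used, and condition (ii) records that the disjoint union of the classes equals $\{i^{m_i}:i\in\mathrm{supp}(\bold m)\}$. Counting the subsets by $\binom{q}{k}$ and summing over $k$ yields \eqref{equation1}, the sum being finite because $P_k(\bold m,\mathcal G)=\emptyset$ for $k>|\bold m|$. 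Since each $\binom{q}{k}$ is a polynomial of degree $k$ in $q$, this simultaneously exhibits ${}_{\bold m}\Pi^{\mathrm{mark}}_{\mathcal G}$ as agreeing with a fixed polynomial at every $q\in\mathbb N$, which justifies extending its definition to all $q\in\mathbb Z$.

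The last assertion then follows by extracting the coefficient of $q$ from each $\binom{q}{k}$ — a normalized Stirling number of the first kind — and summing the contributions over $k$. I expect the only genuinely delicate point to be the verification that the class-assignment map is a well-defined two-sided bijection: one must check against conditions (i)--(iii) that color classes land precisely in $\mathcal I(\mathcal G)$ and, in particular, that the multiplicities of isotropic vertices are distributed among and recovered correctly from the $k$ classes. Once this is in place, the passage to the polynomial identity and the coefficient extraction are routine.
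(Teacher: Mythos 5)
Your proposal is correct and is essentially the paper's own argument: the paper likewise fixes a $k$-element subset of colors, bijects the marked multi-colorings using exactly those colors with $P_k(\mathbf{m},\mathcal{G})$ via the color classes (checking conditions (i)--(iii) exactly as you do), and then sums over $k$ and over the $\binom{q}{k}$ choices of subsets. One caveat about your ``routine'' final step: the coefficient of $q$ in $\binom{q}{k}$ is $\frac{(-1)^{k-1}}{k}$, not $\frac{(-1)^{k-1}}{k!}$, so your extraction actually yields $\sum_{k\ge 1}\frac{(-1)^{k-1}}{k}\,|P_k(\mathbf{m},\mathcal{G})|$ --- this is the correct value (it is what the paper uses later in the proof of Theorem~\ref{multroo1}), and the $k!$ appearing in the stated ``in particular'' clause is a typo in the statement rather than an error in your method.
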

\begin{proof}
We fix a subset $\{c_1<\dots<c_k\}\subseteq \{1,\dots,q\}$ of colors and consider marked multi-colorings $_{\bold{m}}\Gamma^{\mathrm{mark}}_\mathcal{G}$ which use only the $c_i's$ to color the vertices of $\mathcal{G}$. These marked multi-colorings are in natural bijection with $P_k(\mathbf{m},\mathcal{G})$ which finishes the proof by varying the number of colors and the exact choice of colors. To see this, let $_{\bold{m}}\Gamma^{\mathrm{mark}}_\mathcal{G}$ be a marked multi-coloring of $\mathcal{G}$ using only the aforementioned colors. Now we can associate a natural element $(P_1,\dots,P_k)\in {P}_k(\mathbf{m},\mathcal{G})$ as follows. Denoting by $a_{i,r}$ the multiplicity of color $c_r$ in $_{\bold{m}}\Gamma^{\mathrm{mark}}_\mathcal{G}(i)$, we set 
$$P_r=\{i^{a_{i,r}}: i\in I\},\ \ 1\le r\le k.$$
It follows from the definition of $_{\bold{m}}\Gamma^{\mathrm{mark}}_\mathcal{G}$ that $P_r\in \mathcal{I}({\mathcal{G}})$ is non-empty and these multi-sets $P_r,\ 1\le r\le k,$ form a partition of the multi-set $\{i^{m_i}: i\in \mathrm{supp}(\mathbf{m})\}$. Thus $(P_1,\dots,P_k)\in {P}_k(\mathbf{m},\mathcal{G})$. Conversely, given any multi-partition $(P_1,\dots,P_k)\in {P}_k(\mathbf{m},\mathcal{G})$ we
assign the color $c_r$ to the vertices in $P_r$ counted with multiplicity, for each $1\le r\le k$. This gives a marked multi-coloring of $\mathcal{G}$ and the afore described correspondence is bijective. 
\end{proof}
By the above proposition, we can treat $_{\bold{m}}\Pi^{\mathrm{mark}}_\mathcal{G}(q)$ as a polynomial in a formal variable $q$ and specialize $q$ to any integer. This polynomial is called the marked chromatic polynomial of $\mathcal{G}$ associated to $\mathbf{m}$.
\subsection{} Now, we establish the connection between the marked multivariate independence series of $\mathcal{G}$ and marked chromatic polynomials. We have the following relationship. 
\begin{thm}\label{expmarkedindp}
    Let $\mathcal{G}$ be a marked graph with vertex set $I$. For $q\in \mathbb{Z}$, we have as formal series
    $$\mathcal{I}(\mathcal{G},\mathbf{x})^q=\sum_{\mathbf{m}\in \mathbb{Z}_+^{I}} {_{\bold{m}}\Pi^{\mathrm{mark}}_\mathcal{G}(q)}\ x^\mathbf{m}.$$
\end{thm}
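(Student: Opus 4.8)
The plan is to extract the coefficient of $x^{\mathbf m}$ on both sides and recognize it through Proposition~\ref{props6}. Write $\mathcal{I}(\mathcal{G},\mathbf{x})=1+f$, where $f=\sum_{\emptyset\neq U\in\mathcal{I}(\mathcal{G})}x(U)$ is the sum over all non-empty finite independent multi-subsets respecting the marking; since every such $U$ contributes a monomial of total degree $|U|\geq 1$, the series $f$ has vanishing constant term. As the constant term of $\mathcal{I}(\mathcal{G},\mathbf x)$ is $1$, for every $q\in\mathbb{Z}$ the element $\mathcal{I}(\mathcal{G},\mathbf x)^q\in R$ is computed by the generalized binomial series
$$\mathcal{I}(\mathcal{G},\mathbf{x})^q=(1+f)^q=\sum_{k\geq 0}\binom{q}{k}\,f^k .$$
For $q\in\mathbb{N}$ this is the ordinary binomial theorem, and for arbitrary $q\in\mathbb{Z}$ it follows from the Vandermonde convolution $\sum_{k}\binom{q}{k}\binom{q'}{n-k}=\binom{q+q'}{n}$, which shows that $q\mapsto\sum_{k}\binom{q}{k}f^k$ is an additive one-parameter family of units of $R$ coinciding with the integer powers of $1+f$.

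Next I would compute the coefficient of $x^{\mathbf m}$ in $f^k$. Expanding, $f^k=\sum_{(U_1,\dots,U_k)}x(U_1)\cdots x(U_k)$, the sum ranging over ordered $k$-tuples of non-empty members of $\mathcal{I}(\mathcal{G})$; the monomial $x^{\mathbf m}=\prod_i x_i^{m_i}$ arises precisely from those tuples whose multiset-union is $\{i^{m_i}:i\in\operatorname{supp}(\mathbf m)\}$. By definition these are exactly the elements of $P_k(\mathbf m,\mathcal{G})$, so $f^k[x^{\mathbf m}]=|P_k(\mathbf m,\mathcal{G})|$. Moreover each factor has total degree $\geq 1$, hence $f^k$ carries no monomial of total degree below $k$; therefore $f^k[x^{\mathbf m}]=0$ whenever $k>|\mathbf m|$, so the relevant sum is finite and every manipulation is legitimate in $R$. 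Combining the two displays, for $\mathbf m$ of non-empty support we obtain
$$\mathcal{I}(\mathcal{G},\mathbf{x})^q[x^{\mathbf m}]=\sum_{k\geq 1}\binom{q}{k}\,|P_k(\mathbf m,\mathcal{G})| .$$

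Finally I would invoke Proposition~\ref{props6}, which asserts exactly that $\sum_{k\geq 1}\binom{q}{k}|P_k(\mathbf m,\mathcal{G})|={_{\mathbf m}\Pi^{\mathrm{mark}}_\mathcal{G}}(q)$ as an identity of polynomials in $q$, valid for every $q\in\mathbb{Z}$. This gives $\mathcal{I}(\mathcal{G},\mathbf{x})^q[x^{\mathbf m}]={_{\mathbf m}\Pi^{\mathrm{mark}}_\mathcal{G}}(q)$ for all $\mathbf m$ with non-empty support, while the case $\mathbf m=\mathbf 0$ is immediate, since the constant term of $\mathcal{I}(\mathcal{G},\mathbf x)^q$ is $1$, matching the single empty marked coloring. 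Summing over $\mathbf m\in\mathbb{Z}_+^I$ then yields the claimed identity. The only delicate point — and the real obstacle — is the passage from $q\in\mathbb{N}$ to all $q\in\mathbb{Z}$: the coloring interpretation of ${_{\mathbf m}\Pi^{\mathrm{mark}}_\mathcal{G}}$ makes direct sense only for non-negative integer $q$, so the general case rests entirely on the fact that both sides are, coefficientwise, the \emph{same} polynomial $\sum_{k}\binom{q}{k}|P_k(\mathbf m,\mathcal{G})|$ in $q$. It is precisely the generalized binomial expansion (which is integer-power-compatible for all $q\in\mathbb{Z}$) together with Proposition~\ref{props6} that supply this common polynomial, so no separate interpolation argument is needed.
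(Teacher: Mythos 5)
Your proposal is correct and takes essentially the same route as the paper's proof: both expand $\mathcal{I}(\mathcal{G},\mathbf{x})^q=\sum_{k\geq 0}\binom{q}{k}\bigl(\mathcal{I}(\mathcal{G},\mathbf{x})-1\bigr)^k$, identify the coefficient of $x^{\mathbf{m}}$ in the $k$-th power with $|P_k(\mathbf{m},\mathcal{G})|$, and conclude via Proposition~\ref{props6}, treating $\mathbf{m}=0$ separately. Your extra justifications (the Vandermonde-convolution argument for integer powers and the finiteness remark $f^k[x^{\mathbf m}]=0$ for $k>|\mathbf m|$) merely make explicit what the paper leaves implicit.
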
 
\begin{proof} 
    For any $f\in R$ with constant term $1$, we have 
    $$f^q=\sum_{k\geq 0}\binom{q}{k}(f-1)^k,$$
    where we understand $\binom{-n}{k}=(-1)^k\binom{n+k-1}{k}$ for $n\in\mathbb{N}$.
    Set $f=\mathcal{I}(\mathcal{G},\mathbf{x})$ and note that 
    $$f-1 = \mathcal{I}(\mathcal{G},\mathbf{x})-1=\sum_{U\in \mathcal{I}(\mathcal{G})\setminus \{\emptyset\}} \prod_{i\in U}x_i.$$
    Thus, for $k\geq 1$ and $\mathbf{m}\neq 0$, the coefficient $(f-1)^k[x^\mathbf{m}]$ is given by $|P_k(\mathbf{m},\mathcal{G})|$ and we get with Proposition~\ref{props6}:
    
 
    $$\mathcal{I}(\mathcal{G}\mathbf{x})^q[x^\mathbf{m}]=\sum_{k\geq 1}\binom{q}{k}(f-1)^k[x^\mathbf{m}]=\sum_{k\geq 1}\binom{q}{k}|P_k(\mathbf{m},\mathcal{G})|={_{\bold{m}}\Pi^{\mathrm{mark}}_\mathcal{G}(q)},\  \text{for all $\mathbf{m}\neq 0$}.$$ This completes the proof; the coefficients for $\mathbf{m}=0$ clearly coincide.
\end{proof}

\subsection{}
There is also a connection between the ordinary chromatic polynomials and marked chromatic polynomials. We define the join of $\mathcal{G}$ with respect to $\bold{m}$, denoted by $\mathcal{G}(\bold{m})$, as follows:
the vertices of $\mathcal{G}(\bold{m})$ are $\{i_1,\dots,i_{m_i}: i\in \mathrm{supp}(\bold{m})\}$ and the edges of $\mathcal{G}(\bold{m})$ are given by $e(i_r,i_s)$ for all $1\leq r\neq s\leq m_i$ and $e(i_r,j_s)$ for each $e(i,j)\in E(\mathcal{G})$ and $1\leq r\leq m_i$ and $1\leq s\leq m_j$.  That is, we replace each vertex $i\in  \mathrm{supp}(\bold{m})$ by a clique of size $m_i$ and take the connected sum of the $i$-th and $j$-th cliques whenever $i$ and $j$ are connected in the graph $\mathcal{G}$. 

Let $m$ be a non-negative integer. A partition $\lambda$ of $m$, denoted as $\lambda \vdash m$, is an ordered tuple $\lambda = (\lambda_1\ge \cdots \ge \lambda_k>0)$ of non-negative integers such that $\sum_{i=1}^k \lambda_i =m$, and the length of the partition $\ell(\lambda)$ is defined to be $k$. For any $j\in \mathbb{N}$, let $d^\lambda_{j}$ be the number appearances of $j$ in the partition $\lambda$. We further define 
$$S(\bold m)= \left\{\boldsymbol{\lambda} = (\boldsymbol{\lambda}_i)_{i\in I} : \boldsymbol{\lambda}_i \vdash m_i\ \forall i\in I, \text{ and } \boldsymbol{\lambda}_i=(1^{m_i})\ \forall i\notin I_1^{\mathrm{it}}\right\}.$$ 
Note that $\boldsymbol{\lambda}_i$ is the empty partition for all $i\in I$ with $m_i=0$.
Moreover, for $\boldsymbol{\lambda}\in S(\bold m)$, we set $ \bold s(\boldsymbol{\lambda})=(\ell(\boldsymbol{\lambda}_i) : i\in I)$, where the length of the empty partition is understood to be zero.
We have the following relation between the ordinary chromatic polynomials (denoted by $\pi_{\mathcal{G}}(q)$) and marked chromatic polynomials. 
\begin{prop}\label{ordinarychormatic} We have 
   \begin{equation}
    \label{equation}
    {_{\bold{m}}\Pi^{\mathrm{mark}}_\mathcal{G}(q)}=\sum\limits_{\boldsymbol{\lambda}\in S(\bold m)} \frac{\pi_{\mathcal{G}(\bold s(\boldsymbol{\lambda}))}(q)}{\prod_{i\in \mathrm{supp}(\bold m)}\prod_{k=1}^{\infty}(d^{\boldsymbol{\lambda}_i}_{k}!)}
\end{equation} 
\end{prop}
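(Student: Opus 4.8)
The plan is to establish a bijective correspondence between marked multi-colorings of $\mathcal{G}$ associated to $\bold{m}$ and pairs consisting of a choice of $\boldsymbol{\lambda}\in S(\bold m)$ together with an ordinary proper coloring of the join graph $\mathcal{G}(\bold s(\boldsymbol{\lambda}))$, modulo a suitable symmetry that accounts for the denominator. Since both sides of \eqref{equation} are polynomials in $q$ (the left side by Proposition~\ref{props6}, and the right side because $\pi_{\mathcal{G}(\bold s(\boldsymbol{\lambda}))}(q)$ is an ordinary chromatic polynomial), it suffices by interpolation to verify the identity for all $q\in\mathbb{N}$, where both sides count actual colorings. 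So throughout I would fix a positive integer $q$ and compare counts directly.

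First I would unpack the data of a marked multi-coloring $_{\bold{m}}\Gamma^{\mathrm{mark}}_\mathcal{G}$. For each vertex $i\in\mathrm{supp}(\bold{m})$, the value $_{\bold{m}}\Gamma^{\mathrm{mark}}_\mathcal{G}(i)$ is a multi-subset of $\{1,\dots,q\}$ of size $m_i$, which is a genuine subset when $i\notin I_1^{\mathrm{it}}$. Recording, for each color $c\in\{1,\dots,q\}$, how many times it is used at vertex $i$ yields a composition of $m_i$; forgetting which colors are used and retaining only the multiplicities gives a partition $\boldsymbol{\lambda}_i\vdash m_i$, and this partition is forced to be $(1^{m_i})$ exactly when $i\notin I_1^{\mathrm{it}}$ (no color repeats), which is precisely the defining condition of $S(\bold m)$. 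Thus every marked multi-coloring determines a well-defined $\boldsymbol{\lambda}\in S(\bold m)$.

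Next I would fix $\boldsymbol{\lambda}\in S(\bold m)$ and count the marked multi-colorings inducing this partition type. For each $i$, choosing the multiset $_{\bold{m}}\Gamma^{\mathrm{mark}}_\mathcal{G}(i)$ of type $\boldsymbol{\lambda}_i$ amounts to assigning a distinct color to each of the $\ell(\boldsymbol{\lambda}_i)$ parts (parts of the same size being interchangeable). I would realize this by replacing vertex $i$ with a clique of size $\ell(\boldsymbol{\lambda}_i)$ — one clique-vertex per part — and proper adjacency between distinct cliques coming from edges of $\mathcal{G}$ encodes exactly the condition (iii) that colors at adjacent vertices of $\mathcal{G}$ be disjoint. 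This clique construction is precisely $\mathcal{G}(\bold s(\boldsymbol{\lambda}))$, so the colorings assigning distinct colors to the parts are counted by $\pi_{\mathcal{G}(\bold s(\boldsymbol{\lambda}))}(q)$. The overcount comes from parts of equal size within a single $\boldsymbol{\lambda}_i$: permuting parts of size $k$ gives the same multiset, so I must divide by $d^{\boldsymbol{\lambda}_i}_k!$ for each $k$, and taking the product over all $k$ and all $i\in\mathrm{supp}(\bold m)$ yields exactly the denominator in \eqref{equation}. Summing over all $\boldsymbol{\lambda}\in S(\bold m)$ then gives the total number of marked multi-colorings, which is $_{\bold{m}}\Pi^{\mathrm{mark}}_\mathcal{G}(q)$.

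The main obstacle, and the step demanding the most care, is verifying that the symmetry group acting on a coloring of $\mathcal{G}(\bold s(\boldsymbol{\lambda}))$ is exactly $\prod_{i}\prod_{k}\mathfrak{S}_{d^{\boldsymbol{\lambda}_i}_k}$ and that this group acts \emph{freely} on the set of proper colorings counted by $\pi_{\mathcal{G}(\bold s(\boldsymbol{\lambda}))}(q)$ — so that simple division by the order of the group is valid. Freeness holds because within a clique all clique-vertices receive distinct colors, so a nontrivial permutation of equal-size parts genuinely changes the color assigned to at least one clique-vertex; hence no proper coloring is fixed by a nontrivial element, and the orbit-counting reduces to plain division. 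Once this is confirmed, the fibers over each $\boldsymbol{\lambda}$ assemble to the right-hand side and the proof is complete.
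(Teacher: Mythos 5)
Your proof is correct and takes essentially the same route as the paper's: both stratify the marked multi-colorings by their partition type $\boldsymbol{\lambda}\in S(\bold m)$ and set up a $\prod_{i\in \mathrm{supp}(\bold m)}\prod_{k}(d^{\boldsymbol{\lambda}_i}_{k}!)$-to-one correspondence between ordinary proper colorings of the join $\mathcal{G}(\bold s(\boldsymbol{\lambda}))$ and marked multi-colorings of type $\boldsymbol{\lambda}$. Your orbit-counting formulation (with the freeness check) and the explicit interpolation remark reducing the polynomial identity to $q\in\mathbb{N}$ merely formalize steps the paper leaves implicit.
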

\begin{proof}
     Let ${_{\bold{m}}\Gamma^{\mathrm{mark}}_\mathcal{G}}$ be a marked multi-coloring of $\mathcal{G}$ associated to $\bold{m}$ using at most $q$ colors.
     By definition, $_{\bold{m}}\Gamma^{\mathrm{mark}}_\mathcal{G}(i)$ is a multi-subset (resp. subset) of $\{1,\dots,q\}$ if $i\in I_1^{\mathrm{it}}$ (resp. $i\notin I_1^{\mathrm{it}}$)  with $|{_{\bold{m}}\Gamma^{\mathrm{mark}}_\mathcal{G}}(i)|=m_i$. We associate an element $\boldsymbol{\lambda} = (\boldsymbol{\lambda}_i)_{i\in I}\in S(\bold m)$ and a family of ordinary vertex colorings of $\mathcal{G}(\bold s(\boldsymbol{\lambda}))$ as follows. If $m_i=0$, then $\boldsymbol{\lambda}_i=\emptyset$. Otherwise $i\in \mathrm{supp}(\mathbf{m})$ and let 
     $${_{\bold{m}}\Gamma^{\mathrm{mark}}_\mathcal{G}}(i)=\{1^{b_{i,1}},\dots,q^{b_{i,q}}\}.$$
     Then $\boldsymbol{\lambda}_i=(\lambda^1_i\ge \cdots \ge \lambda^{\ell(\boldsymbol{\lambda}_i)}_i>0)$ is defined to be the unique partition of $m_i$ associated to the set $\{b_{i,1},\dots,b_{i,q}\}$. Now consider the graph $\mathcal{G}(\bold s(\boldsymbol{\lambda}))$ and fix an ordering 
     on the vertices of each clique (recall that the size of the cliques are given by $\ell(\boldsymbol{\lambda}_i)$). For any choice of colors, $c_1,\dots,c_{\ell(\boldsymbol{\lambda}_i)}$ with
\begin{equation}\label{4re}\lambda^1_i=b_{i,c_1},\dots,\lambda^{\ell(\boldsymbol{\lambda}_i)}_i=b_{i,c_{\ell(\boldsymbol{\lambda}_i)}}\end{equation}
we color the first vertex of the $i$-th clique of $\mathcal{G}(\bold s(\boldsymbol{\lambda}))$ by $c_1$, the second vertex with $c_2$ and so on. This gives an ordinary vertex coloring. It is easy to see that the number of choices of colors satisfying \eqref{4re} and thus the number of ordinary colorings we get is exactly $\prod_{i\in \mathrm{supp}(\bold m)}\prod_{k=1}^{\infty}(d^{\boldsymbol{\lambda}_i}_{k})!$

Now conversely let $\boldsymbol{\lambda} = (\boldsymbol{\lambda}_i)_{i\in I}\in S(\bold m)$ and fix an ordering on the vertices of $\mathcal{G}(\bold s(\boldsymbol{\lambda}))$. Let ${\tau}_{\mathcal{G}(\bold s(\boldsymbol{\lambda}))}$ be a vertex coloring  of $\mathcal{G}(\bold s(\boldsymbol{\lambda}))$ and suppose that the first vertex of the $i$-th clique receives color $c_1$, the second $c_2$ and so on.
We associate a marked multi-coloring $_{\bold{m}}\Gamma^{\mathrm{mark}}_\mathcal{G}$ of $\mathcal{G}$ associated to $\bold{m}$ as follows. Define 
$${_{\bold{m}}\Gamma^{\mathrm{mark}}_\mathcal{G}}(i)=\{c_1^{\lambda^1_i},\dots,c_r^{\lambda^r_i}\},\ \text{ where $\boldsymbol{\lambda}_i=(\lambda^1_i\ge \cdots \ge \lambda^r_i>0)$}.$$
Again, if $\lambda_{i}^{r_1}=\lambda_{i}^{r_2}$, then the vertex coloring obtained from ${\tau}_{\mathcal{G}(\bold s(\boldsymbol{\lambda}))}$ by interchanging the colors $c_{r_1}$ and $c_{r_2}$ in the $i$-th clique gives the same marked multi-coloring. Hence $\prod_{k=1}^{\infty}(d^{\boldsymbol{\lambda_i}}_{k}!)$ number of usual vertex coloring of $\mathcal{G}(\bold s(\boldsymbol{\lambda}))$ correspond to exactly one marked multi-coloring of $\mathcal{G}$ associated to $\bold{m}$. Taking the sum over all such tuples of partitions gives ${_{\bold{m}}\Pi^{\mathrm{mark}}_\mathcal{G}}(q)$, and the claim follows.
\end{proof}
\begin{example} We compute the marked chromatic polynomial of the following marked star graph. We emphasize that $2,n-1,n$ (resp. $1,3,n-2$) are fixed to be white (resp. black) and the nodes between $4$ and $n-3$ can be arbitrary: 
\begin{figure}[H]
\centering
\begin{subfigure}{0.49\textwidth}
\centering
\begin{tikzpicture}
        \tikzstyle{B}=[circle,draw=black!80,thick]
        \tikzstyle{C}=[circle,draw=black!80,fill=black!80,thick]
        \node[C] (1) at (0,0) [label=below:$n-2$]{};
        \node[B] (2) at (1.5,0) [label=below:$4$]{};
        \node[B] (3) at (-0.8,1.5) [label=left:$n-1$]{};
        \node[C] (4) at (2.3,1.5) [label=right:$3$]{};
        \node[B] (5) at (1.5,3) [label=right:$2$]{};
        \node[C] (6) at (0.75,1.5) [label=below:$1$]{};
        \node[B] (7) at (0,3) [label=left:$n$]{};
        \path[-] (1) edge node[left]{} (6);
        \path[-] (2) edge node[left]{} (6);
        \path[-] (3) edge node[left]{} (6);
        \path[-] (4) edge node[left]{} (6);
        \path[-] (5) edge node[left]{} (6);
        \path[-] (7) edge node[left]{} (6);
        \path (1) -- node[auto=false]{\ldots} (2);
    \end{tikzpicture}
\end{subfigure}
\end{figure}
First we will consider the colorings corresponding to $\boldsymbol{\lambda}\in S(\bold m)$, i.e. marked multi-colorings where $_{\bold{m}}\Gamma^{\mathrm{mark}}_\mathcal{G}(i)$, $i\in\mathrm{supp}(\mathbf{m})$, is of the form 
\begin{equation}\label{form}{_{\bold{m}}\Gamma^{\mathrm{mark}}_\mathcal{G}}(i)=\{c_1^{\lambda^1_i},\dots,c_{r_i}^{\lambda^{r_i}_i}\},\ \text{ $\boldsymbol{\lambda}_i=(\lambda^1_i\ge \cdots \ge \lambda^{r_i}_i>0)$}\end{equation}
for some $c_1,\dots,c_{r_i}\in\{1,\dots,q\}.$
Clearly, $_{\bold{m}}\Gamma^{\mathrm{mark}}_\mathcal{G}(1)$ is a subset of $\{1,\dots,q\}$, so we have $\binom{q}{m_1}$ choices to color vertex $1$. We continue with vertex $2\in I_1^{\mathrm{it}}$. If $m_2\neq 0$, 
we can color vertex $2$ with $r_2=\ell(\boldsymbol{\lambda}_2)$ distinct colors (recall that the multiplicities are fixed by the partition) from the remaining $q-m_1$ colors. So the number of possible colorings of vertex 2 is given by $\binom{q-m_1}{r_2}\frac{r_2!}{\prod_{k=1}^{\infty}(d_k^{\boldsymbol{\lambda}_2}!)}$. 
Since $3\notin I_1^{\mathrm{it}}$, to color vertex $3$ we can choose any $m_3$ colors with the restriction that we can not use the colors used for vertex $1$. Hence we have $\binom{q-m_1}{m_3}$ choices. We continue coloring the remaining vertices in the same fashion depending whether a vertex is contained in $\mathrm{supp}(\mathbf{m})\cap I_1^{\mathrm{it}}$ or not, because all of those vertices are only adjacent to vertex $1$.
Thus the number of coloring corresponding to $\boldsymbol{\lambda}$ is given by
$$\binom{q}{m_1}\prod_{j\in \mathrm{supp}(\mathbf{m}), j\neq 1}\binom{q-m_1}{\ell(\boldsymbol{\lambda}_j)}\frac{\ell(\boldsymbol{\lambda}_j)!}{\prod_{k=1}^{\infty}(d^{\boldsymbol{\lambda}_{j}}_k!)}$$ and hence for the marked star graph we get
$${_{\bold{m}}\Pi^{\mathrm{mark}}_\mathcal{G}}(q)=\binom{q}{m_1}\sum_{\boldsymbol{\lambda}\in S(\bold m)}\prod_{j\in \mathrm{supp}(\mathbf{m}), j\neq 1}\binom{q-m_1}{\ell(\boldsymbol{\lambda}_j)}\frac{\ell(\boldsymbol{\lambda}_j)!}{\prod_{k=1}^{\infty}(d^{\boldsymbol{\lambda}_{j}}_k!)}.$$
\end{example}
\subsection{}\label{peographsection} We continue determining the marked chromatic polynomials for chordal graphs. We first define what a prefect elimination ordering is. 
\begin{defn} A total ordering $\leq $ of the vertices of $\mathcal{G}$ is called a perfect elimination ordering if for each $k\in I$, the subgraph $\mathcal{G}_k$ induced by the set of vertices 
 $$\{r\in I\setminus \{k\}:e(r,k)\in E, r<k\}\cup \{k\}$$ 
 is a finite complete subgraph. A graph $\mathcal{G}$ is called a PEO-graph if there exists a perfect elimination ordering on its vertices.

\end{defn}
\begin{thm}\label{peomarked}
    Let $\mathcal{G}$ be a PEO-graph with a countable vertex set $I$. We have
     \begin{equation}
    {_{\bold{m}}\Pi^{\mathrm{mark}}_\mathcal{G}}(q)=\sum\limits_{\boldsymbol{\lambda}\in S(\bold m)}\prod _{j\in \mathrm{supp}(\bold{m})} \binom{q-b^{\boldsymbol{\lambda}}_{j}}{\ell(\boldsymbol{\lambda}_{j})} \frac{\ell(\boldsymbol{\lambda}_{j})!}{\prod_{k=1}^{\infty}(d^{\boldsymbol{\lambda}_{j}}_{k}!)},\ \ b^{\boldsymbol{\lambda}}_{j}=\sum\limits_{\substack{i\in \mathcal{G}_j\backslash\{j\}}}\ell(\boldsymbol{\lambda}_{i})
        \end{equation}

\end{thm}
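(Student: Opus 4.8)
The plan is to combine Proposition~\ref{ordinarychormatic} with the classical product formula for the chromatic polynomial of a graph admitting a perfect elimination ordering. By Proposition~\ref{ordinarychormatic} it suffices, for each fixed $\boldsymbol{\lambda}\in S(\bold m)$, to evaluate the ordinary chromatic polynomial $\pi_{\mathcal{G}(\bold s(\boldsymbol{\lambda}))}(q)$ of the join graph, whose $i$-th clique has size $\ell(\boldsymbol{\lambda}_i)$ for $i\in\mathrm{supp}(\bold m)$, then divide by $\prod_{i}\prod_k(d^{\boldsymbol{\lambda}_i}_k!)$ and sum over $\boldsymbol{\lambda}$. Recall that if a finite graph $\mathcal{H}$ has a perfect elimination ordering in which each vertex $v$ has $d_v$ earlier neighbours, then $\pi_{\mathcal{H}}(q)=\prod_v (q-d_v)$: colouring the vertices greedily in that order forces each $v$ to avoid exactly the $d_v$ pairwise-distinct colours of its (necessarily clique-forming) earlier neighbours, leaving $q-d_v$ independent choices at each step. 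Since $\mathrm{supp}(\bold m)$ is finite and each $\mathcal{G}_i$ is finite, $\mathcal{G}(\bold s(\boldsymbol{\lambda}))$ is a finite graph, so this formula is applicable.

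The key step is to produce such an ordering on $\mathcal{G}(\bold s(\boldsymbol{\lambda}))$ by lifting the perfect elimination ordering of $\mathcal{G}$. Writing the vertices of the $i$-th clique as $i_1,\dots,i_{\ell(\boldsymbol{\lambda}_i)}$, I declare $i_t< j_u$ whenever $i<j$ in the ordering of $\mathcal{G}$, or $i=j$ and $t<u$. For the vertex $i_t$ the earlier neighbours are the $t-1$ vertices $i_1,\dots,i_{t-1}$ of its own clique together with every vertex of every clique $j$ with $j\in\mathcal{G}_i\setminus\{i\}$, all of which precede $i_t$ because $j<i$. I would then verify that this is a perfect elimination ordering: the cliques of $i$ and $j$ are completely joined whenever $e(i,j)\in E$, and since $\mathcal{G}_i$ is a clique in $\mathcal{G}$, any two earlier neighbour-positions $j,j'\in\mathcal{G}_i\setminus\{i\}$ are adjacent, so their blown-up cliques are completely joined as well; hence the earlier closed neighbourhood of $i_t$ is itself a clique. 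This is precisely where the PEO hypothesis on $\mathcal{G}$ enters, and it is the main (though routine) obstacle, amounting to checking that chordality is preserved under the clique-substitution defining the join.

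Reading off the back-degrees, the vertex $i_t$ has $d_{i_t}=b^{\boldsymbol{\lambda}}_i+(t-1)$ earlier neighbours, with $b^{\boldsymbol{\lambda}}_i=\sum_{j\in\mathcal{G}_i\setminus\{i\}}\ell(\boldsymbol{\lambda}_j)$ accounting for the cross-clique contributions. Applying the product formula gives
\begin{equation*}
\pi_{\mathcal{G}(\bold s(\boldsymbol{\lambda}))}(q)=\prod_{i\in\mathrm{supp}(\bold m)}\ \prod_{t=1}^{\ell(\boldsymbol{\lambda}_i)}\bigl(q-b^{\boldsymbol{\lambda}}_i-(t-1)\bigr)=\prod_{i\in\mathrm{supp}(\bold m)}\ell(\boldsymbol{\lambda}_i)!\binom{q-b^{\boldsymbol{\lambda}}_i}{\ell(\boldsymbol{\lambda}_i)},
\end{equation*}
where in the last equality I recognise the falling factorial as $\ell!\binom{\,\cdot\,}{\ell}$. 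Substituting this into Proposition~\ref{ordinarychormatic} and cancelling each $\ell(\boldsymbol{\lambda}_i)!$ against the denominator yields exactly $\sum_{\boldsymbol{\lambda}\in S(\bold m)}\prod_{j}\binom{q-b^{\boldsymbol{\lambda}}_j}{\ell(\boldsymbol{\lambda}_j)}\ell(\boldsymbol{\lambda}_j)!/\prod_k(d^{\boldsymbol{\lambda}_j}_k!)$, as claimed. As an alternative, one could argue directly on $\mathcal{G}$: for fixed $\boldsymbol{\lambda}$, sweep through the PEO of $\mathcal{G}$ and count marked multi-colourings of type $\boldsymbol{\lambda}$, choosing at vertex $j$ some $\ell(\boldsymbol{\lambda}_j)$ new colours among the $q-b^{\boldsymbol{\lambda}}_j$ avoiding the earlier-neighbour colours and then distributing the multiplicities of $\boldsymbol{\lambda}_j$ in $\ell(\boldsymbol{\lambda}_j)!/\prod_k(d^{\boldsymbol{\lambda}_j}_k!)$ ways; the count at each vertex depends only on $\boldsymbol{\lambda}$ (through $b^{\boldsymbol{\lambda}}_j$) and not on the specific earlier colours, again by the clique property, so the two routes agree.
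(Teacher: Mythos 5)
Your argument is correct, but it follows a genuinely different route from the paper's. The paper proves the theorem by a direct greedy count on $\mathcal{G}$ itself: it restricts to the finite subgraph spanned by $\mathrm{supp}(\mathbf{m})$, fixes $\boldsymbol{\lambda}\in S(\mathbf{m})$, and sweeps through the perfect elimination ordering, noting at each vertex $n$ that the earlier neighbours $\mathcal{G}_n\setminus\{n\}$ form a clique, hence carry pairwise disjoint colour sets forbidding exactly $b^{\boldsymbol{\lambda}}_n$ colours; this gives $\binom{q-b^{\boldsymbol{\lambda}}_n}{\ell(\boldsymbol{\lambda}_n)}\,\ell(\boldsymbol{\lambda}_n)!/\prod_{k\geq 1}(d^{\boldsymbol{\lambda}_n}_k!)$ choices at vertex $n$ independently of the colours already used, and summing over $\boldsymbol{\lambda}$ finishes the proof --- which is exactly the ``alternative'' you sketch in your final sentences. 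Your main route instead factors through Proposition~\ref{ordinarychormatic}: you reduce to the ordinary chromatic polynomial of the join $\mathcal{G}(\mathbf{s}(\boldsymbol{\lambda}))$, lift the PEO of $\mathcal{G}$ to the join lexicographically (the one genuinely new verification, and you do check that the blown-up earlier neighbourhoods remain cliques), read off the back-degrees $b^{\boldsymbol{\lambda}}_i+(t-1)$, and invoke the classical product formula $\pi_{\mathcal{H}}(q)=\prod_v(q-d_v)$ for finite graphs with a perfect elimination ordering, obtaining $\pi_{\mathcal{G}(\mathbf{s}(\boldsymbol{\lambda}))}(q)=\prod_i \ell(\boldsymbol{\lambda}_i)!\binom{q-b^{\boldsymbol{\lambda}}_i}{\ell(\boldsymbol{\lambda}_i)}$; substituting into Proposition~\ref{ordinarychormatic} gives the stated formula. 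What your route buys is modularity: it reuses a result the paper has already established together with standard chordal-graph theory, so the PEO hypothesis is consumed by a single routine blow-up lemma, and the falling-factorial shape of the answer becomes transparent. What the paper's route buys is self-containedness: it needs neither the join construction nor the classical product formula, and the symmetry factor $\ell(\boldsymbol{\lambda}_j)!/\prod_{k\geq 1}(d^{\boldsymbol{\lambda}_j}_k!)$ arises directly as the count of ways to distribute the multiplicities of $\boldsymbol{\lambda}_j$ rather than from bookkeeping against the denominator of Proposition~\ref{ordinarychormatic}. Both arguments ultimately rest on the same principle: since earlier neighbours form a clique, the number of admissible choices at each vertex is independent of the colours chosen before it.
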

\begin{proof}
     Let $\mathrm{supp}(\bold{m})=\{i_1<\dots<i_N\}$ with respect to the chosen PEO-ordering and note that it is enough to consider the subgraph corresponding to the nodes in $\mathrm{supp}(\bold{m})$ which we relabel as $\{1,\dots,N\}$. For simplicity, we call this subgraph in the rest of the proof also by $\mathcal{G}$. Clearly, this is again a PEO-graph with the obvious induced ordering. As above, we will consider colorings corresponding to $\boldsymbol{\lambda}\in S(\bold m)$, i.e. those of the form \eqref{form}. To color the first vertex, we have as above (recall that if $1\notin I_1^{\mathrm{it}}$, then $\boldsymbol{\lambda}_1=(1^{m_1})$) 
     $$\binom{q}{\ell(\boldsymbol{\lambda}_{1})}\frac{\ell(\boldsymbol{\lambda}_{1})!}{\prod_{k=1}^{\infty}(d^{\boldsymbol{\lambda}_{1}}_k!)}$$ possibilities. 
   Now suppose that we have colored the first $n-1$ vertices, $2\leq n<N$ and note that the subgraph $\mathcal{G}_n$ is complete. So in order to color vertex $n$ we can not use the colors that are used to color the vertices in $\mathcal{G}_n\backslash\{n\}$. So we have to choose $\ell(\boldsymbol{\lambda}_n)$ colors out of the remaining $q-b^{\boldsymbol{\lambda}}_{n}$
   and we get $$\binom{q-b^{\boldsymbol{\lambda}}_{n}}{\ell(\boldsymbol{\lambda}_n)}\frac{\ell(\boldsymbol{\lambda}_n)!}{\prod_{k=1}^{\infty}(d^{\boldsymbol{\lambda}_{n}}_k!)}$$
   choices to color vertex $n$. Taking the sum over all such partitions $\boldsymbol{\lambda}\in S(\bold m)$, gives the result.

\end{proof}
A chordal graph is one in which all cycles of four or more vertices have an edge that is not part of the cycle but connects two vertices of the cycle. It is known that a graph is chordal if and only if it has a PEO-ordering. For example, complete graphs $K_n$ are chordal and any ordering gives a PEO-ordering.
\section{Root multiplicities, BKM superalgebras and marked chromatic polynomials}\label{rootmultisection}
\subsection{}
In this subsection, we connect the root multiplicities of $\mathcal{P}(\mathcal{G})$ with the marked chromatic polynomials. We denote by ${_{\bold{m}}\Pi^{\mathrm{mark}}_\mathcal{G}}(q)[q]$ the coefficient of $q$ in
${_{\bold{m}}\Pi^{\mathrm{mark}}_\mathcal{G}}(q)$ and let $\mu:\mathbb{N}\rightarrow \{0,\pm1\}$ be the M\"{o}bius function. Given $\bold m = (m_i : i\in I)\in \mathbb{Z}_{+}^I$ and $\ell \ge 1$, we say
$\ell | \bold m$ if $\ell | m_i$ for each $i\in I$ and set $\epsilon(\bold{m})=1$ if the sum over its odd entries is even and $\epsilon(\bold{m})=-1$ otherwise. The following result generalizes \cite[Corollary 3.9]{AKV18} and \cite[Corollary 1.4.]{SA2024}.
\begin{thm}\label{multroo1} For any $\bold m\in \mathbb{Z}_+^{I}$, we have
$$ \mathrm{mult}_{\mathcal{G}}(\bold m) = \epsilon\big(\bold m)\sum\limits_{\ell | \bold m}\frac{\mu(\ell)}{\ell}\epsilon\big(\bold m/\ell\big)(-1)^{|\bold m|/\ell-1} {_{\frac{\bold{m}}{\ell}}\Pi^{\mathrm{mark}}_\mathcal{G}}(q)[q].$$
In particular, if the $m_i$'s are relatively prime, then  $\mathrm{mult}_{\mathcal{G}}(\bold m)=|{_{\bold{m}}\Pi^{\mathrm{mark}}_\mathcal{G}}(q)[q]|$.         
\end{thm}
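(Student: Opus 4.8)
The plan is to extract $\mathrm{mult}_{\mathcal{G}}(\mathbf{m})$ from the denominator identity of Proposition~\ref{denomi} by computing $\log\mathcal{I}(\mathcal{G},-\mathbf{x})$ in two independent ways and then applying a multivariate M\"obius inversion.

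First I would produce a clean series expression for $\log\mathcal{I}(\mathcal{G},\mathbf{x})$. Since $\mathcal{I}(\mathcal{G},\mathbf{x})$ has constant term $1$, we may write $\mathcal{I}(\mathcal{G},\mathbf{x})^q=\exp\!\big(q\log\mathcal{I}(\mathcal{G},\mathbf{x})\big)$ and compare the linear-in-$q$ terms on both sides of Theorem~\ref{expmarkedindp} (equivalently, differentiate at $q=0$). Using that ${_{\mathbf{m}}\Pi^{\mathrm{mark}}_\mathcal{G}}(q)$ has vanishing constant term for $\mathbf{m}\neq 0$ (Proposition~\ref{props6}), this yields
$$\log\mathcal{I}(\mathcal{G},\mathbf{x})=\sum_{\mathbf{m}\neq 0}{_{\mathbf{m}}\Pi^{\mathrm{mark}}_\mathcal{G}}(q)[q]\,x^{\mathbf{m}}.$$
Substituting $\mathbf{x}\mapsto-\mathbf{x}$ then gives the coefficient of $x^{\mathbf{m}}$ in $\log\mathcal{I}(\mathcal{G},-\mathbf{x})$ as $(-1)^{|\mathbf{m}|}\,{_{\mathbf{m}}\Pi^{\mathrm{mark}}_\mathcal{G}}(q)[q]$.

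Next I would take the logarithm of Proposition~\ref{denomi}, expand $\log(1+x^\beta)=\sum_{n\geq1}\frac{(-1)^{n-1}}{n}x^{n\beta}$ and $-\log(1-x^\beta)=\sum_{n\geq1}\frac{1}{n}x^{n\beta}$, and collect the coefficient of a fixed $x^{\mathbf{m}}$, which arises from the pairs $(\beta,n)$ with $n\beta=\mathbf{m}$, i.e. $\beta=\mathbf{m}/n$ for $n\mid\mathbf{m}$. The crucial bookkeeping step is to unify the $\beta\in\Delta_1^{\mathcal{G}}$ and $\beta\in\Delta_0^{\mathcal{G}}$ contributions: using $\epsilon(\mathbf{m})=\epsilon(\beta)^n$ for $\beta=\mathbf{m}/n$, one checks that in both cases the contribution equals $\tfrac{1}{n}\epsilon(\mathbf{m})\,\epsilon(\mathbf{m}/n)\,\mathrm{mult}_{\mathcal{G}}(\mathbf{m}/n)$. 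Equating the two expressions for the $x^{\mathbf{m}}$-coefficient and multiplying by $\epsilon(\mathbf{m})$ gives the convolution identity
$$\epsilon(\mathbf{m})(-1)^{|\mathbf{m}|-1}\,{_{\mathbf{m}}\Pi^{\mathrm{mark}}_\mathcal{G}}(q)[q]=\sum_{n\mid\mathbf{m}}\frac{1}{n}\,\epsilon(\mathbf{m}/n)\,\mathrm{mult}_{\mathcal{G}}(\mathbf{m}/n).$$

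Finally I would invert this relation. Writing $F(\mathbf{m})$ for the left-hand side and $G(\mathbf{k})=\epsilon(\mathbf{k})\,\mathrm{mult}_{\mathcal{G}}(\mathbf{k})$, the identity reads $F(\mathbf{m})=\sum_{n\mid\mathbf{m}}\frac{1}{n}G(\mathbf{m}/n)$, whose inverse is $G(\mathbf{m})=\sum_{\ell\mid\mathbf{m}}\frac{\mu(\ell)}{\ell}F(\mathbf{m}/\ell)$; this is verified by substituting and using $\sum_{\ell\mid d}\mu(\ell)=\delta_{d,1}$. Unwinding the definitions and multiplying through by $\epsilon(\mathbf{m})$ recovers precisely the claimed formula, with $|\mathbf{m}/\ell|=|\mathbf{m}|/\ell$. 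For the last assertion, when the $m_i$ are relatively prime the only divisor is $\ell=1$, so the formula collapses to $\mathrm{mult}_{\mathcal{G}}(\mathbf{m})=(-1)^{|\mathbf{m}|-1}\,{_{\mathbf{m}}\Pi^{\mathrm{mark}}_\mathcal{G}}(q)[q]$; since the left-hand side is a dimension, hence non-negative, the sign $(-1)^{|\mathbf{m}|-1}$ must match that of the coefficient, yielding $\mathrm{mult}_{\mathcal{G}}(\mathbf{m})=|{_{\mathbf{m}}\Pi^{\mathrm{mark}}_\mathcal{G}}(q)[q]|$. I expect the sign unification via $\epsilon$ to be the main technical obstacle, together with justifying the formal-power-series manipulations (well-definedness of the logarithmic expansions and of the divisor sums over $\mathbb{Z}_+^{I}$).
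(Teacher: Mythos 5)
Your proposal is correct and follows essentially the same route as the paper's own proof: take the logarithm of the denominator identity (Proposition~\ref{denomi}), identify the coefficient of $x^{\mathbf{m}}$ in $-\log\mathcal{I}(\mathcal{G},-\mathbf{x})$ with $(-1)^{|\mathbf{m}|-1}\,{_{\mathbf{m}}\Pi^{\mathrm{mark}}_\mathcal{G}}(q)[q]$, unify the even and odd contributions via $\epsilon(\mathbf{m})\epsilon(\mathbf{m}/n)^{n+1}=\epsilon(\mathbf{m}/n)$, and conclude by M\"obius inversion. The only cosmetic differences are that you obtain the logarithmic coefficient by comparing $q$-linear terms in Theorem~\ref{expmarkedindp} (which the paper instead reads off directly from Proposition~\ref{props6}), and that you invert the weighted divisor sum directly rather than first multiplying through by $|\mathbf{m}|$; both variants are equivalent.
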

\begin{proof} From Proposition~\ref{denomi} we get 
    $$\mathcal{I}(\mathcal{G},-\mathbf{x})=\prod_{\beta\in \Delta^{\mathcal{G}}}(1-\epsilon(\beta)x^\beta)^{\epsilon(\beta)\mathrm{mult}_{\mathcal{G}}(\beta)}.$$
Taking $-\mathrm{log}$ on both sides of this identity, we obtain 
 \begin{equation*} \label{eq1}
\begin{split}
-\mathrm{log}\ \mathcal{I}(\mathcal{G},-\mathbf{x}) & = \sum\limits_{\beta\in \Delta^{\mathcal{G}}}\epsilon(\beta)\mathrm{mult}_{\mathcal{G}}(\beta)(-\mathrm{log}(1-\epsilon(\beta)x^\beta)) \\
 & = \sum\limits_{\beta\in \Delta^{\mathcal{G}}}\sum\limits_{k\ge1}\epsilon(\beta)^{k+1}\mathrm{mult}_{\mathcal{G}}(\beta)\frac{x^{k\beta}}{k}.
\end{split}
\end{equation*} 
  Hence, for any $\bold m = (m_i : i\in I)\in\mathbb{Z}_{+}^{I}$ we conclude
   $$-\mathrm{log}\ \mathcal{I}(\mathcal{G},-\mathbf{x})[x^{\bold m}]= \sum\limits_{k|\bold m}\frac{1}{k}\epsilon\big(\bold m/k\big)^{k+1}\mathrm{mult}_{\mathcal{G}}(\bold m/k). $$
On the other hand, we have from the Propositions \ref{props6} and the definition of $P_r(\mathbf{m},\mathcal{G})$:
$$-\mathrm{log}\ \mathcal{I}(\mathcal{G},-x)[x^{\bold m}]=(-1)^{|\bold m|}\sum_{r\geq 1}\frac{(-1)^{r}}{r}|P_r(\mathbf{m},\mathcal{G})|= (-1)^{|\bold m|-1}{_{\bold{m}}\Pi^{\mathrm{mark}}_\mathcal{G}}(q)[q].$$
Therefore, using $\epsilon(\mathbf{m})\epsilon(\mathbf{m}/k)^{k+1}=\epsilon(\mathbf{m}/k)$ for any $k|\mathbf{m}$ we get
$$ \sum\limits_{k|\bold m}\frac{|\mathbf{m}|}{k}\epsilon\big(\bold m/k\big)\mathrm{mult}_{\mathcal{G}}(\bold m/k)  = (-1)^{|\bold m|-1}|\mathbf{m}|\epsilon(\mathbf{m})\ {_{\bold{m}} \Pi^{\mathrm{mark}}_\mathcal{G}}(q)[q].$$
Now the M\"{o}bius inversion formula gives
$$\mathrm{mult}_{\mathcal{G}}(\bold m) = \epsilon\big(\bold m)\sum\limits_{\ell | \bold m}\frac{\mu(\ell)}{\ell}\epsilon\big(\bold m/\ell\big)(-1)^{|\bold m|/\ell-1} {_{\frac{\bold{m}}{\ell}}\Pi^{\mathrm{mark}}_\mathcal{G}}(q)[q].$$
\end{proof}
\begin{example}\label{exchrhalb}\begin{enumerate}\item Let $i\in I_1^{\mathrm{it}}$ and $j\in I$ arbitrary. From the super Jacobi identity and $[e_i,e_i]=0$ we get $[e_i,[e_i,e_j]]=0$ in $\mathcal{P}(\mathcal{G})$ and thus $\mathbf{m}$ with $m_i=2, m_j=1$ and zero otherwise is not a root. Alternatively, we can check this using Theorem~\ref{multroo1} and \eqref{equation}, namely 
$$\mathrm{mult}_{\mathcal{G}}(\bold m)=|{_{\bold{m}}\Pi^{\mathrm{mark}}_\mathcal{G}}(q)[q]|=|\pi_{\mathcal{G}'}(q)[q]+\frac{1}{2}\pi_{\mathcal{G}''}(q)[q]|=|-1+1|=0$$
where $\mathcal{G}'$ is a line with two nodes and $\mathcal{G}''$ is a triangle.
\item Let $\mathbf{m}\in \mathbb{Z}^I_+$ such that $m_i\in \{0,2\}$ for all $i\in I$ and suppose that the full subgraph spanned by $\mathrm{supp}(\mathbf{m})$ is a star graph satisfying $i,j\in I_1^{\mathrm{it}}$ whenever $(i,j)\notin E$. If $|\mathrm{supp}(\bold m)|\geq 3$, we have \begin{equation}\label{einhalb} _{\bold{m}}\Pi^{\mathrm{mark}}_\mathcal{G}(q)[q]=-\frac{1}{2}.\end{equation}
    This follows immediately from the calculation of the marked chromatic polynomial
    which is
    $${_{\bold{m}}\Pi^{\mathrm{mark}}_\mathcal{G}}(q)=\binom{q}{2}\binom{q-1}{2}^{|\mathrm{supp}(\bold m)|-1}+\epsilon q\binom{q}{2}^{|\mathrm{supp}(\bold m)|-1}$$
    where $\epsilon=1$ if the center of the star graph is odd isotropic and zero otherwise. We also have \eqref{einhalb} if $|\mathrm{supp}(\bold m)|=2$ and $\mathrm{supp}(\bold m)\nsubseteq I\backslash I_1^{\mathrm{it}}$.
    \end{enumerate}
\end{example}

\begin{rem}
Similarly, the marked chromatic polynomial can be expressed in terms of root multiplicities as follows: for all $\bold m\in \mathbb{Z}_+^{I}$ and $q\in\mathbb{Z}$, we have
$$
{_{\bold{m}}\Pi^{\mathrm{mark}}_\mathcal{G}}(q) = (-1)^{|\bold m|} \sum\limits_{\bold J} (-1)^{\sum k_i}
\prod_{i=1}^r\epsilon(\beta_i)^{k_i}\binom{q \epsilon(\beta_i)\mathrm{mult}_{\mathcal{G}}(\beta_i)}{k_i}.$$
where the sum runs over $$\bold J\in \dot\bigcup_{r\ge 1} \left\{((\beta_i, k_i))_{i=1}^r\in (\Delta^{\mathcal{G}}\times \mathbb{Z}_+)^{\times r}  :  k_1\beta_1+\cdots +k_r\beta_r = \bold m\right\}.$$
For the explicit description of $\Delta^{\mathcal{G}}$ see Proposition~\ref{rootdescp}. This generalizes \cite[Theorem 1.3]{SA2024} for free roots of partially commutative Lie superalgebras and \cite[Theorem 1]{AKV18} for partially commutative Lie algebras.
\end{rem}
\subsection{}\label{BKMconnections}
Here we recall the notion of Borcherds-Kac-Moody (BKM) superalgebras from \cite{Ray06,Wa01} and extract the relevant information needed for $\mathcal{P}(\mathcal{G})$. 
Fix a subset $\Psi\subseteq I$. Let $\lie h_{\mathbb{R}}$ be a real vector space with a non-degenerate symmetric real valued bilinear form $(.,.)$ and fix elements $\tilde{h}_i$, $i\in I$ such that
\begin{enumerate}
	\item $(\tilde{h}_i,\tilde{h}_j)\leq 0$ if $i\neq j$
	\item $(\tilde{h}_i,\tilde{h}_i)>0 \implies 2\frac{(\tilde{h}_i,\tilde{h}_j)}{(\tilde{h}_i,\tilde{h}_i)} \in\mathbb{Z}$ for all $j\in I$
	\item $(\tilde{h}_i,\tilde{h}_i)>0,\ i \in \Psi \implies \frac{(\tilde{h}_i,\tilde{h}_j)}{(\tilde{h}_i,\tilde{h}_i)} \in\mathbb{Z}$ for all $j\in I$.
    
\end{enumerate} 
We set $\lie h=\lie h_{\mathbb{R}}\otimes_{\mathbb{R}}\mathbb{C}$, $a_{ij}=(\tilde{h}_i,\tilde{h}_j)$ and 
$$I^{\mathrm{re}}=\{i\in I: a_{ii}>0\},\ \ I^{\mathrm{im}}=I\backslash I^{\mathrm{re}},\ \ \Psi^{re} = \Psi \cap I^{re},\ \ \Psi_0 = \{i \in \Psi^{} : a_{ii}= 0\}.$$ 
The symmetric real valued matrix $A=(a_{ij})$ is called a BKM supermatrix.

The BKM superalgebra $\lie g=\lie g(A,\lie h,\Psi)$ associated to $(A,\lie h, \Psi)$ is the Lie superalgebra generated by $\tilde{e}_i, \tilde{f}_i, i \in I$ and $\lie h$, where $\tilde{e}_i$ and $\tilde{f}_i$ are odd (resp. even) elements for $i\in \Psi$ (resp. $i \notin \Psi$) and all elements $\tilde{h}_i, i\in I$ are even, with the following defining relations:
\begin{enumerate}
	\item $[h,h']=0$ for $h,h'\in \lie h$,\vspace{0,1cm}
	\item $[h, \tilde{e}_j]=(h,\tilde{h}_j)\tilde{e}_j$,  $[h, \tilde{f}_j]=-(h,\tilde{h}_j)\tilde{f}_j$ for $h\in \lie h, j\in I$,\vspace{0,1cm}
	\item $[\tilde{e}_i, \tilde{f}_j]=\delta_{ij}\tilde{h}_i$ for $i, j\in I$,\vspace{0,1cm}
	\item $(\text{ad }\tilde{e}_i)^{1-\frac{2a_{ij}}{a_{ii}}}\tilde{e}_j=0 = (\text{ad }\tilde{f}_i)^{1-\frac{2a_{ij}}{a_{ii}}}\tilde{f}_j$ if $i \in I^{re}$ and $i \ne j$, \vspace{0,1cm}
	\item $(\text{ad }\tilde{e}_i)^{1-\frac{a_{ij}}{a_{ii}}}\tilde{e}_j=0 = (\text{ad }\tilde{f}_i)^{1-\frac{a_{ij}}{a_{ii}}}\tilde{f}_j$ if $i \in \Psi^{\mathrm{re}}$ and $i \neq j$,\vspace{0,1cm}
	\item $[\tilde{e}_i, \tilde{e}_j]= 0 = [\tilde{f}_i, \tilde{f}_j]$ if $a_{ij}=0$. 
\end{enumerate}
We write $\lie g=\lie n^{-}\oplus \lie h \oplus \lie n^+$ as vector spaces, where $\lie n^{+}$ (resp. $\lie n^{-}$) is the Lie sub-superalgebra generated by the elements $\tilde{e}_i$ (resp. $\tilde{f}_i$),\ $i\in I$ (see \cite[Corollary 2.1.19]{Ray06}). Let $Q$ be the formal root lattice defined to be the free abelian group generated by $\tilde{\alpha}_i$, $i\in I$ and for $\alpha=\sum_{k=1}^j \tilde{\alpha}_{i_k}\in Q$ let $\lie g_{\alpha}$ (resp. $\lie g_{-\alpha}$) be the subspace of $\lie g$ spanned by the elements
$$[\tilde{e}_{i_j}[\cdots[\tilde{e}_{i_2},\tilde{e}_{i_1}]],\ \ \ (\text{resp. } \ [\tilde{f}_{i_j}[\cdots[\tilde{f}_{i_2},\tilde{f}_{i_1}]]).$$
Let $Q^+\subseteq Q$ be the subset of non-negative linear combinations of the $\tilde{\alpha}_i's$ and denote the set of roots by $\Delta(\lie g)=\{\alpha\in Q: \lie g_{\alpha}\neq 0\}$. We have (see \cite[Proposition 2.3.3/2.3.5]{Ray06})
$$\lie g_{\alpha_i}=\mathbb{C} \tilde{e}_i,\ \ \lie g_{-\alpha_i}=\mathbb{C} \tilde{f}_i,\ \ \lie g_{\alpha}\neq 0\implies \alpha\in Q^+ \text{ or }-\alpha\in Q^+,\ \mathrm{dim}(\lie g_{\alpha})=\mathrm{dim}(\lie g_{-\alpha}).$$ 
The notions of positive and negative roots, the height of a root, and the support of a root are defined in the obvious way.
\begin{rem}\begin{enumerate}
    \item If $\Psi=\emptyset$, then the BKM superalgebra is simply a Borcherds algebra. 
    \item  Given a BKM Lie superalgebra $\lie g$, let $\mathcal{G}$ be the marked simple graph with marking $I_1^{\mathrm{it}}\subseteq I_1\subseteq I$ determined by
    $$(i,j)\in E,\ \text{iff $a_{ij}\neq 0$,\  $i\neq j$},\ I_1=\Psi,\ \Psi_0=I_1^{\mathrm{it}}.$$
In \cite{SA2024}, free roots of BKM superalgebras have been studied. These are roots $\alpha=\sum_{i\in I}k_i\tilde{\alpha}_i\in Q^+$ with $k_i\leq 1$ for all $i\in I^{\mathrm{re}}\cup \Psi_0$. The root spaces $\lie g_{\alpha}$ corresponding to free roots are closely related to root spaces of partially commutative Lie superalgebras defined in \cite[Definition 3.3]{SA2024} (recall that their definition is different from ours). The same idea (c.f. \cite[Lemma 3.10]{SA2024}) extends to root spaces $\lie g_{\alpha}$ with the weaker restriction $k_i\leq 1$ for all $i\in I^{\mathrm{re}}$ and are identifed with $\mathcal{P}(\mathcal{G})_{\sum_{i\in I}k_i \alpha_i}$. Hence the study of $\mathcal{P}(\mathcal{G})$ is also important from the perspective of BKM Lie superalgebras.


\end{enumerate}
   
\end{rem}
\subsection{} The first part of the following lemma follows from \cite[Lemma 3.10 and Propositon 5.1]{SA2024}, while the second part can be derived from \cite[Lemma 2.3.31]{Ray06} by identifying $\mathcal{P}(\mathcal{G})$ with the positive part $\mathfrak{n}^+$ of a BKM superalgebra.
\begin{lem}\label{someroots}
\begin{enumerate}
    \item Any element $\mathbf{m}\in \mathbb{Z}^I_+$ with connected support and $m_i\leq 1$ for all $i\in I_1^{\mathrm{it}}$ is contained in  $\Delta^{\mathcal{G}}$.
    \item Let $\alpha,\beta\in \Delta^{\mathcal{G}}\backslash \{\alpha_i: i\in I_1^{\mathrm{it}}\}$ such that $\mathrm{supp}(\alpha+\beta)$ is connected and $\alpha,\beta$ are not proportional (i.e. $\alpha\neq \beta, \alpha\neq 2\beta,\beta\neq 2\alpha$). Then $\alpha+\beta\in \Delta^{\mathcal{G}}.$ 
\end{enumerate}\qed
\end{lem}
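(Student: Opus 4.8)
The plan is to transport the question into the Borcherds--Kac--Moody setting. Using the dictionary of Section~\ref{BKMconnections}, I would first realize $\mathcal{P}(\mathcal{G})$ as the positive part $\lie n^+$ of a BKM superalgebra $\lie g$ all of whose simple roots are imaginary, the isotropic ones being exactly the vertices in $I_1^{\mathrm{it}}$. Under this identification $\Delta^{\mathcal{G}}\setminus\{0\}$ becomes the set of positive roots of $\lie g$ and $\mathrm{mult}_{\mathcal{G}}(\beta)=\dim\lie g_\beta$, so that both assertions turn into standard structural statements about roots of $\lie g$. Setting up this identification is the content of \cite[Lemma 3.10]{SA2024}; after that I would treat the two parts separately.

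For part (1) I would give a self-contained combinatorial argument from the machinery already developed. The key observation is that the hypothesis $m_i\le 1$ for all $i\in I_1^{\mathrm{it}}$ forces $\boldsymbol{\lambda}_i=(1^{m_i})$ for every $i$, so that $S(\mathbf{m})$ is a singleton and Proposition~\ref{ordinarychormatic} collapses to
$${_{\mathbf{m}}\Pi^{\mathrm{mark}}_\mathcal{G}}(q)=\frac{\pi_{\mathcal{G}(\mathbf{m})}(q)}{\prod_{i\in\mathrm{supp}(\mathbf{m})}m_i!}.$$
Since $\mathrm{supp}(\mathbf{m})$ is connected, the join graph $\mathcal{G}(\mathbf{m})$ is connected as well, whence its chromatic polynomial has $q=0$ as a simple zero and $\pi_{\mathcal{G}(\mathbf{m})}(q)[q]=(-1)^{|\mathbf{m}|-1}\,|\pi_{\mathcal{G}(\mathbf{m})}(q)[q]|\neq 0$. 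If $\mathrm{supp}(\mathbf{m})$ meets $I_1^{\mathrm{it}}$, then $\ell=1$ is the only divisor of $\mathbf{m}$ and Theorem~\ref{multroo1} immediately yields $\mathrm{mult}_{\mathcal{G}}(\mathbf{m})=(-1)^{|\mathbf{m}|-1}\,{_{\mathbf{m}}\Pi^{\mathrm{mark}}_\mathcal{G}(q)[q]}\neq 0$. When $\mathrm{supp}(\mathbf{m})$ avoids $I_1^{\mathrm{it}}$ one must run the full M\"obius sum; here the identity $(-1)^{|\mathbf{m}|/\ell-1}\pi_{\mathcal{G}(\mathbf{m}/\ell)}(q)[q]=|\pi_{\mathcal{G}(\mathbf{m}/\ell)}(q)[q]|$ aligns the signs, so the only surviving sign is $\mu(\ell)\epsilon(\mathbf{m})\epsilon(\mathbf{m}/\ell)$. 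This is precisely where the Lie-theoretic realization (or \cite[Proposition 5.1]{SA2024}) is cleaner, since for non-primitive all-even supports bare connectedness is not enough -- the single vertex $\mathbf{m}=2\alpha_i$ with $i$ even has $\mathrm{mult}_{\mathcal{G}}(\mathbf{m})=0$, as the cancellation in the sum detects.

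For part (2) I would work directly in $\lie g$. Choosing $0\neq x\in\lie g_\alpha$ and $0\neq y\in\lie g_\beta$, the goal is $[x,y]\neq 0$, which forces $\alpha+\beta\in\Delta(\lie g)$. Using the invariant supersymmetric form and a representative $f_\alpha\in\lie g_{-\alpha}$ with $[x,f_\alpha]=t_\alpha$, the super-Jacobi identity gives
$$[f_\alpha,[x,y]]=\pm(\alpha,\beta)\,y\ \pm\ [x,[f_\alpha,y]],$$
so that $[x,y]\neq 0$ whenever $(\alpha,\beta)\neq 0$ and $\beta-\alpha\notin\Delta(\lie g)\cup\{0\}$. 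The remaining cases, when $(\alpha,\beta)\ge 0$ or $\beta-\alpha$ is itself a root, are handled by exploiting that $\mathrm{supp}(\alpha+\beta)$ is connected: one moves a simple root $\alpha_i$ across the support, replacing $(\alpha,\beta)$ by $(\alpha,\beta-\alpha_i)$ or $(\alpha+\alpha_i,\beta)$ until the favourable regime is reached, and then inducts on the height. This bookkeeping is exactly \cite[Lemma 2.3.31]{Ray06}, which I would invoke after checking that its hypotheses translate; the exclusions $\alpha,\beta\notin\{\alpha_i:i\in I_1^{\mathrm{it}}\}$ and the non-proportionality are what rule out the degenerate brackets $[e_i,e_i]=0$ and $[\lie g_\alpha,\lie g_\alpha]$.

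The main obstacle is the case analysis in part (2): when $(\alpha,\beta)\ge 0$ the one-step form computation fails and one must use connectedness to descend, taking care that each reduction step stays outside the isotropic simple roots so that no bracket collapses. A secondary subtlety, already visible above, is that the bare statement of part (1) needs a small caveat for single non-isotropic even vertices; the sign analysis detects this, while the BKM root criterion excludes it automatically.
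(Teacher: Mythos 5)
Your proposal follows essentially the same route as the paper: the paper's entire proof of Lemma~\ref{someroots} is the remark preceding it, namely that part (1) follows from \cite[Lemma 3.10 and Proposition 5.1]{SA2024} and part (2) from \cite[Lemma 2.3.31]{Ray06}, after identifying $\mathcal{P}(\mathcal{G})$ with the positive part $\lie{n}^+$ of a BKM superalgebra all of whose simple roots are imaginary --- exactly the reduction you set up, including the (unwritten, in both cases) translation of Ray's hypotheses $(\alpha,\alpha)\neq 0$, $(\beta,\beta)\neq 0$, $(\alpha,\beta)\neq 0$ into the graph-theoretic ones. Your two supplements are sound and add value. First, the computation via Theorem~\ref{multroo1} and Proposition~\ref{ordinarychormatic} gives a genuinely self-contained proof of part (1) whenever $\mathrm{supp}(\mathbf{m})\cap I_1^{\mathrm{it}}\neq\emptyset$: there $S(\mathbf{m})$ is a singleton, the M\"obius sum reduces to the single term $\ell=1$, and the linear coefficient of the chromatic polynomial of the connected join graph $\mathcal{G}(\mathbf{m})$ is nonzero; you are right that this argument does not close the all-even non-primitive case, where falling back on the citation (as the paper does throughout) is the cleanest option. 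Second, your caveat is a correct catch: as literally stated, part (1) fails for singleton supports such as $\mathbf{m}=2\alpha_i$ with $i\in I_0$ (or $k\alpha_i$ with $k\geq 3$), consistent with Proposition~\ref{rootdescp}; since the paper only ever applies the lemma to elements whose support has at least two vertices, nothing downstream is affected, but the statement should carry the hypothesis $|\mathrm{supp}(\mathbf{m})|\geq 2$.
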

Although the identification with BKM superalgebras has been established, the explicit description of $\Delta^{\mathcal{G}}$ remains unclear. The root description in \cite[Theorem 2.46]{Wa01} for $I^{\mathrm{re}}=\emptyset$ appears to be correct only under the additional assumption $\Psi_0=\emptyset$. We will provide a description of $\Delta^{\mathcal{G}}$ in the next subsection.

\section{Description of the set of roots of $\mathcal{P}(\mathcal{G})$}\label{rootssection}
Recall the definition of $\Delta^{\mathcal{G}}$ from Section~\ref{subsect28}. 

\subsection{}Let $N_\mathcal{G}(i)=\{j\in I: j\neq i, (i,j)\in E\}$. We call $\mathbf{m}\in \mathbb{Z}^I_+$ a \textit{star element} if the full subgraph of $\mathcal{G}$ spanned by $\mathrm{supp}(\mathbf{m})$ is a star graph.
\begin{prop}\label{rootdescp} Let $P$ be the set of elements $\mathbf{m}\in \mathbb{Z}^I_+$ satisfying 
\begin{itemize}
    \item $\mathrm{supp}(\mathbf{m})\ \text{is connected and $|\mathrm{supp}(\mathbf{m})|\geq 2$}$ \vspace{0,1cm}
    \item $\sum_{j\in N_\mathcal{G}(i)}m_j\geq m_i$ for all  $i\in \mathrm{supp}(\mathbf{m})\cap I_1^{\mathrm{it}}\ \text{with } m_i\geq 2$
\end{itemize}
and let $K^0_s$ (resp. $K^1_s$) be the subset of star elements $\mathbf{m}\in P$ satisfying 
\begin{itemize}
\item $m_i=m_j\geq s$ for all $i,j\in \mathrm{supp}(\mathbf{m})$ and $|\mathrm{supp}(\mathbf{m})\cap I_1|$ is even (resp. odd).
 \vspace{0,1cm}
    \item $i,j\in I_1^{\mathrm{it}}$ for all $i,j\in \mathrm{supp}(\mathbf{m})$ with $(i,j)\notin E$
    \vspace{0,1cm}
    \item $\mathrm{supp}(\mathbf{m})\cap I_1^{\mathrm{it}}\neq \emptyset$. \vspace{0,1cm}

\end{itemize}
Then we have \begin{equation}\label{rootsdes}\Delta^{\mathcal{G}}=(P\backslash K) \cup \{\alpha_i: i\in I\} \cup \{2\alpha_i: i\in I_1\backslash I_1^{\mathrm{it}}\},\ \ K:=K_2^0\cup K_3^1.\end{equation}
\end{prop}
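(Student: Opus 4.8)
The plan is to prove both inclusions in \eqref{rootsdes}, treating the small-support pieces first and then the bulk set $P\setminus K$. If $\mathrm{supp}(\mathbf{m})=\{i\}$, a direct super Jacobi computation settles everything: for $i$ even or isotropic one has $[e_i,e_i]=0$, so only $\alpha_i$ occurs, while for $i\in I_1\setminus I_1^{\mathrm{it}}$ the vector $[e_i,e_i]$ is a nonzero even element and $[e_i,[e_i,e_i]]=0$, so exactly $\alpha_i$ and $2\alpha_i$ occur. This accounts for the last two sets on the right, and henceforth I assume $|\mathrm{supp}(\mathbf{m})|\ge 2$. If $\mathrm{supp}(\mathbf{m})$ is disconnected, the generators attached to distinct connected components commute, so the subalgebra they generate is a direct sum of Lie superalgebras and carries no element of mixed degree; hence any root with $|\mathrm{supp}|\ge 2$ has connected support. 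The statement thus reduces to $\Delta^{\mathcal{G}}\cap\{\,|\mathrm{supp}(\mathbf{m})|\ge 2\,\}=P\setminus K$.

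For the necessity of the second defining condition of $P$, I would use the elimination theorem together with the identity $(\mathrm{ad}\,e_i)^2=0$, which holds for every $i\in I_1^{\mathrm{it}}$ by super Jacobi and $[e_i,e_i]=0$. Eliminating $e_i$ expresses $\mathcal{P}(\mathcal{G})$ as $\mathbb{C}e_i$ together with the subalgebra generated by the elements $(\mathrm{ad}\,e_i)^n(e_k)$, $k\ne i$; since $(\mathrm{ad}\,e_i)^2=0$ and $[e_i,e_k]=0$ unless $k\in N_{\mathcal{G}}(i)$, the only surviving eliminated generators are the $e_k$ ($k\ne i$), of $e_i$-degree $0$, and the $[e_i,e_k]$ ($k\in N_{\mathcal{G}}(i)$), of $e_i$-degree $1$. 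Any bracket of these generators of total degree $\mathbf{m}$ with $m_i\ge 2$ must therefore use exactly $m_i$ generators of type $[e_i,e_k]$, and each consumes one copy of a neighbour $e_k$; hence $\sum_{j\in N_{\mathcal{G}}(i)}m_j\ge m_i$. If this inequality fails there is no such bracket and $\mathcal{P}(\mathcal{G})_{\mathbf{m}}=0$, giving the sharp bound and the inclusion $\Delta^{\mathcal{G}}\cap\{|\mathrm{supp}|\ge 2\}\subseteq P$.

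It remains to see which elements of $P$ fail to be roots, and the decisive computation is for balanced star elements, where the second condition defining $K$ forces all leaves to be isotropic. For such a star with centre $0$, leaves indexed by $\ell$, and all coordinates equal to $c$, the independence series splits as $\mathcal{I}(\mathcal{G},\mathbf{x})=C(x_0)+\prod_{\ell}(1-x_\ell)^{-1}$ with $C(x_0)=x_0$ or $x_0/(1-x_0)$ according to the marking of the centre. Extracting the coefficient of $x^{\mathbf{m}}$ from $-\log\mathcal{I}(\mathcal{G},-\mathbf{x})$ yields the clean value $1/c$, so that ${_{\mathbf{m}}\Pi^{\mathrm{mark}}_{\mathcal{G}}}(q)[q]=(-1)^{|\mathbf{m}|-1}/c$, consistent with Example~\ref{exchrhalb}. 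Feeding this into Theorem~\ref{multroo1} with $\epsilon(\mathbf{m}/\ell)=(-1)^{(c/\ell)p}$, where $p=|\mathrm{supp}(\mathbf{m})\cap I_1|$, collapses the multiplicity to $\tfrac{\epsilon(\mathbf{m})}{c}\sum_{\ell\mid c}\mu(\ell)(-1)^{(c/\ell)p}$. For $p$ even this is $\tfrac1c\sum_{\ell\mid c}\mu(\ell)$, which vanishes precisely when $c\ge 2$; for $p$ odd the sum $\sum_{\ell\mid c}\mu(\ell)(-1)^{c/\ell}$ equals $-1,2$ for $c=1,2$ and vanishes for all $c\ge 3$. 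This says exactly that the balanced stars in $K_2^0\cup K_3^1$ have multiplicity zero while all other balanced stars are roots, pinning down $K$.

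Finally, for $P\setminus K\subseteq\Delta^{\mathcal{G}}$ I would combine Lemma~\ref{someroots} with the star computation. When $m_i\le 1$ for every isotropic $i\in\mathrm{supp}(\mathbf{m})$, Lemma~\ref{someroots}(1) applies at once; for star elements $\mathbf{m}\in P\setminus K$ the multiplicity is computed directly from the explicit independence series of a star (as above, but without the balancing assumption) and checked to be nonzero. For the remaining non-star elements carrying some isotropic $m_i\ge 2$, I would induct on $|\mathbf{m}|$ and split off a non-isotropic generator, an edge $\alpha_i+\alpha_j$, or the sub-star spanned by a vertex and its heaviest neighbours, so as to write $\mathbf{m}=\alpha+\beta$ with $\alpha,\beta$ strictly smaller, non-proportional, of connected union support and $\beta\in P$, and then invoke Lemma~\ref{someroots}(2). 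The main obstacle is precisely this decomposition step: the peeled piece $\alpha$ must be chosen so that lowering the shared neighbours neither violates the isotropic inequality at a third vertex nor disconnects the support, and so that the intermediate $\beta$ never lands in the exceptional set $K$ where it would fail to be a root. Controlling this case analysis, with the balanced stars of $K$ as the sole genuine obstructions, is the heart of the argument.
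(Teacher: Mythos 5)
Your treatment of the two ``outer'' parts of the statement is sound, and in places genuinely different from the paper's. For the necessity of $\sum_{j\in N_\mathcal{G}(i)}m_j\geq m_i$ you use Lazard-type elimination of an isotropic generator: since $(\mathrm{ad}\,e_i)^2=0$, the ideal generated by $\{e_k:k\neq i\}$ is generated as a subalgebra by the $e_k$ and the $[e_i,e_k]$, $k\in N_\mathcal{G}(i)$, and the degree count follows; the paper instead argues directly on Lie words (between two occurrences of $i$, and after the last one, there must be a neighbour). Both work. More interestingly, your exclusion of $K$ is a direct multiplicity computation: for a balanced star with isotropic leaves and common coordinate $c$ one indeed has $-\log\mathcal{I}(\mathcal{G},-\mathbf{x})[x^{\mathbf m}]=1/c$ (in the expansion of $\bigl(x_0\sum_S x_S\bigr)^c$ the only surviving tuple is the one where every $S_j$ is the full leaf set), and your M\"obius sums are correct: $\sum_{\ell\mid c}\mu(\ell)=0$ for $c\geq 2$, while $\sum_{\ell\mid c}\mu(\ell)(-1)^{c/\ell}$ equals $-1,2,0$ for $c=1$, $c=2$, $c\geq 3$. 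This recovers at one stroke both $K\cap\Delta^{\mathcal G}=\emptyset$ and the fact that the balanced stars with $c=2$ and $|\mathrm{supp}(\mathbf m)\cap I_1|$ odd (the paper's auxiliary set $M$) are roots; the paper proves $K\cap\Delta^{\mathcal G}=\emptyset$ by a word argument ($w$ must have the form $w'\cdots w'$, and super Jacobi forces $e(w)=0$) and uses the chromatic-polynomial computation only for $M$. Your route for this piece is arguably cleaner.

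However, there is a genuine gap, and it is exactly where you say the ``heart of the argument'' lies: the inclusion $P\setminus K\subseteq\Delta^{\mathcal G}$ for elements with some isotropic coordinate $m_i\geq 2$ is never carried out, and it constitutes the bulk of the paper's proof. The paper picks $i_0\in I_1^{\mathrm{it}}$ with $m_{i_0}$ maximal and runs a four-way case analysis governed by the sets $J_{i_0}$, $L_{i_0}$, $S_{j_0}$, $R_{j_0}$, $T_{j_0}=S_{j_0}\cup R_{j_0}$: the peeled piece is $\alpha_{i_0}+\sum_{j\in J_{i_0}}\alpha_j$, or $\alpha_{i_0}+\alpha_{j_0}+\sum_{t\in T_{j_0}}\alpha_t$, with $R_{j_0}$ and $S_{j_0}$ engineered precisely so that the isotropic inequality survives at every third vertex; one then verifies, vertex type by vertex type, that $\tilde{\mathbf m}=\mathbf m-\mathbf m_{i_0,j_0}$ stays in $P$ and avoids $K\cup M$. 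Your proposed peelings (a single generator, a single edge, or the sub-star of a vertex and its heaviest neighbours) are not sufficient as stated: for instance, when removing the chosen neighbour $j_0$ disconnects the support (the paper's Case 2.2.2), the piece split off must absorb entire connected components $\sum_{k\in V_{i_0,j_0}}m_k\alpha_k$, and it is then itself handled by induction rather than by Lemma~\ref{someroots}(1), since its isotropic coordinates may exceed $1$. Relatedly, your claim that non-balanced star elements of $P\setminus K$ can simply be ``checked to be nonzero'' from the independence series is asserted, not proved: unlike the balanced case there is no clean closed form such as $1/c$, and one would have to show that the $\ell=1$ term dominates a signed M\"obius sum of multinomial-type counts; the paper avoids this by folding the non-balanced stars into the same induction. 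So the blueprint is correct, but the decisive combinatorial content of the proposition is missing.
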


\begin{proof} 
Assume that $\mathbf{m}\in \Delta^{\mathcal{G}}$, then $\mathrm{supp}(\mathbf{m})$ has to be connected; this is a standard proof in Lie theory. Moreover, if $|\mathrm{supp}(\mathbf{m})|=1$, then clearly $\mathbf{m}=\alpha_i$ for some $i\in I$ or $\mathbf{m}=2\alpha_i$ for some $i\in I_1\backslash I_1^{\mathrm{it}}$. So assume that $|\mathrm{supp}(\mathbf{m})|\geq 2$ and let $w=i_1\cdots i_r\in I^*$ with $e(w)\neq 0$ and $e(w)$ has grade $\mathbf{m}$. 
Then $w$ has the following properties for every $i\in \mathrm{supp}(\mathbf{m})\cap I_1^{\mathrm{it}}$ with $m_i\geq 2$:
\begin{itemize}
    \item We have $i_r\neq i$, since otherwise we can swap $i_{r-1}$ with $i_r$ and the corresponding Lie word is the same up to sign. Note that $i_{r-1}\neq i$ if $i_r=i$, since $[e_i,e_i]=0$. 
    
    \item Between two $i$'s in $w$ there has to be an index $j\in N_{\mathcal{G}}(i)$, since otherwise $e(w)=0$. This follows from $[e_i,[e_i,x]]=0$ and $[e_i,e_k]=0$ for all $k\notin N_{\mathcal{G}}(i)$. Similarly, to the right of the right most $i$ in $w$, there has to be an index in $N_{\mathcal{G}}(i)$.
\end{itemize}
Hence we must have $\sum_{j\in N_\mathcal{G}(i)}m_j\geq m_i$ which gives $\mathbf{m}\in P$. Assume by contradiction $\mathbf{m}\in K$ and write $\mathbf{m}=m \tilde{\mathbf{m}}$ for some $\tilde{\mathbf{m}}\in\mathbb{Z}_+^I$ with $\tilde{m}_{i}\in \{0,1\}$ and $m\geq 2$ (resp. $m\geq 3$) if $|\mathrm{supp}(\mathbf{m})\cap I_1|$ is even (resp. odd). Let $j_0\in \mathrm{supp}(\mathbf{m})$ be the center of the star graph spanned by $\mathrm{supp}(\mathbf{m})$; if the cardinality of the support is two, it contains a node in $I_1^{\mathrm{it}}$ by the last property and we declare the other node to be the center.
Let $\mathrm{supp}(\mathbf{m})=\{j_0,j_1,\dots,j_{\ell}\}$ and note that $\mathrm{supp}(\mathbf{m})\backslash \{j_0\}\subseteq I_1^{\mathrm{it}}$.

In this case, we can suppose that the word $w$ has the following property (again ignoring the sign of the corresponding Lie word)
\begin{itemize}
    \item the word ends with $j_0$, i.e. $i_r=j_0$. 
    
    \item All indices between two $j_0$'s in $w$ are distinct, since otherwise we have an index of $I_1^{\mathrm{it}}$ which appears at least twice between two $j_0$'s and thus $e(w)=0$ again from $[e_i,[e_i,x]]=0$ for all $i\in I_1^{\mathrm{it}}$. Similarly, all indices to the left of the left most $j_0$ are all distinct. 
\end{itemize}
In particular, $w$ must be of the form $w=w' \cdots w' w'$ ($k$ times) where $w'=j_{\ell}\cdots j_{1} j_0$. Hence by the super Jacobi identity and the fact that $[e_j,e(w')]=0$ for all $j\in\{j_1,\dots,j_{\ell}\}$ (again by $[e_i,[e_i,x]]=0$ for all $i\in I_1^{\mathrm{it}}$) we get
$$e(w)=[e(w'),[e(w'),\dots,[e(w'),[e(w'),e(w')]]\cdots]]=0$$
which is a contradiction and hence $\mathbf{m}\notin K$. The discussion so far shows that $\Delta^{\mathcal{G}}$ is contained in the right hand side of \eqref{rootdescp}. For the converse direction, set 
$$M:=\{\mathbf{m}\in K^1_2: m_i=2\  \forall i\in \mathrm{supp}(\mathbf{m})\}$$
and let $\mathbf{m}\in M$. From Theorem~\ref{multroo1} we obtain 
\begin{align*}\mathrm{mult}_{\mathcal{G}}(\mathbf{m})&=-{_{\bold{m}}\Pi^{\mathrm{mark}}_\mathcal{G}}(q)[q]+\frac{1}{2}(-1)^{|\mathbf{m}|/2-1}{_{\frac{\bold{m}}{2}}\Pi^{\mathrm{mark}}_\mathcal{G}}(q)[q]&\\&
=-{_{\bold{m}}\Pi^{\mathrm{mark}}_\mathcal{G}}(q)[q]+\frac{1}{2}(-1)^{|\mathbf{m}|/2-1}(-1)^{|\mathrm{supp}(\bold m)|-1}&\\&
=\frac{1}{2}+\frac{1}{2}(-1)^{|\mathbf{m}|/2+|\mathrm{supp}(\bold m)|}=1 \ \ \ \  \text{\  by \ Example~\ref{exchrhalb}(2)}   
\end{align*}
and thus $M\subseteq \Delta^{\mathcal{G}}$. It remains to show that each $\mathbf{m}\in P\backslash (K\cup M)$ is contained in $\Delta^{\mathcal{G}}$, which we will show by induction on the height of $\mathbf{m}$. The initial step of the induction is clear.

Let $\mathbf{m}\in P\backslash (K\cup M)$ and let $i_0\in I^{\mathrm{it}}_1$ be a node such that $m_{i_0}\geq m_i$ for all $i\in I_1^{\mathrm{it}}$. If $m_{i_0}\leq 1$, then $\mathbf{m}\in \Delta^{\mathcal{G}}$ by Lemma~\ref{someroots}. So suppose that $m_{i_0}\geq 2$ and set
$$J_{i_0}=\left\{j\in N_{\mathcal{G}}(i_0)\cap I^{\mathrm{it}}_1: m_j\in\{m_{i_0},m_{i_0}-1\},\ m_j\geq 2, \ N_{\mathcal{G}}(j)\cap I^{\mathrm{it}}_1\cap \mathrm{supp}(\mathbf{m})=\{i_0\}\right\}.$$
\textit{Case 1:} Assume in the first case that $J_{i_0}$ is non-empty. We set 
$$\mathbf{m}_{i_0}=\alpha_{i_0}+\sum_{j\in J_{i_0}}\alpha_j,\ \ \tilde{\mathbf{m}}=\mathbf{m}-\mathbf{m}_{i_0}.$$
We will show first $\tilde{\mathbf{m}}\in P$. Note that $\mathrm{supp}(\tilde{\mathbf{m}})=\mathrm{supp}(\mathbf{m})$ and hence the support of $\tilde{\mathbf{m}}$ is connected and at least of cardinality two. Given $t\in I_1^{\mathrm{it}}$ with $\tilde{m}_t\geq 2$ we consider two cases:

\textit{Case 1.1:} Suppose $i_0\in N_{\mathcal{G}}(t)$, then we have 
$\tilde{m}_{i_0}=m_{i_0}-1\geq \tilde{m}_{t}$ unless $t\notin \{i_0\}\cup J_{i_0}$ and $m_t=m_{i_0}$. In the latter case we have
$$N_{\mathcal{G}}(t)\cap I^{\mathrm{it}}_1\cap \mathrm{supp}(\mathbf{m})\neq \{i_0\}.$$
So let $j\in N_{\mathcal{G}}(t)\cap I^{\mathrm{it}}_1\cap \mathrm{supp}(\mathbf{m})$ with $j\neq i_0$. The assumption $j\in J_{i_0}$ would give
\begin{equation}\label{44iu} t\in N_{\mathcal{G}}(j)\cap I^{\mathrm{it}}_1\cap \mathrm{supp}(\mathbf{m})=\{i_0\}\implies t=i_0\end{equation}
which is impossible. Thus $j\notin \{i_0\}\cup J_{i_0}$ and $\tilde{m}_j=m_j>0$. So in both cases 
\begin{equation}\label{44t}\tilde{m}_t\leq \sum_{j\in N_{\mathcal{G}}(t)}\tilde{m}_j.\end{equation}

\textit{Case 1.2:} Suppose $i_0\notin N_{\mathcal{G}}(t)$. If $N_{\mathcal{G}}(t)\cap J_{i_0}=\emptyset$ we immediately get \eqref{44t} since $\mathbf{m}\in P$. So let $j'\in N_{\mathcal{G}}(t)\cap J_{i_0}$ which implies $t=i_0$ as in \eqref{44iu}. Again we obtain \eqref{44t} unless $m_{j'}=m_{i_0}-1\geq 2$. Since $\mathbf{m}\in P$, there must exist a node $j''\in N_{\mathcal{G}}(i_0)$ with $j''\neq j'$ and $m_{j''}>0$. Hence
$$\sum_{j\in N_{\mathcal{G}}(t)}\tilde{m}_j=\sum_{j\in N_{\mathcal{G}}(t),\ j\notin\{j',j''\}}\tilde{m}_j+(m_{i_0}-2)+\tilde{m}_{j''}\geq m_{i_0}-1=\tilde{m}_{i_0}.$$
So we finally obtain $\tilde{\mathbf{m}}\in P$. Assume now by contradiction $\tilde{\mathbf{m}}\in K\cup M$. By definition $\mathrm{supp}(\mathbf{m})$ spans a star graph and if the center would be different from $i_0$, it has to be an element of $J_{i_0}$ since $J_{i_0}$ is non-empty. However, by the definition of $J_{i_0}$ this forces $|\mathrm{supp}(\mathbf{m})|=2$. So either $i_0$ is the center or $|\mathrm{supp}(\mathbf{m})|=2$ where we can assume by convention that $i_0$ is the center of the star graph. We write 
$$\mathbf{m}=\tilde{\mathbf{m}}+\mathbf{m}_{i_0}=\sum_{i\in \mathrm{supp}(\mathbf{m})} k\alpha_i +\sum_{i\in \{i_0\}\cup J_{i_0}}\alpha_i=\sum_{i\in \{i_0\}\cup J_{i_0}}(k+1)\alpha_i+\sum_{i\in \mathrm{supp}(\mathbf{m})\backslash (\{i_0\}\cup J_{i_0})}k\alpha_i$$
for some $k\geq 2$. Since $\mathbf{m}\notin K\cup M$, we must have
$$\mathrm{supp}(\mathbf{m})\backslash (\{i_0\}\cup J_{i_0})\neq \emptyset.$$
If $j$ is a node in the above mentioned set we must have $j\in N_{\mathcal{G}}(i_0)\cap I_1^{\mathrm{it}}$ since $\mathrm{supp}(\mathbf{m})$ spans a star graph with center $i_0$ and 
$$m_j=k=m_{i_0}-1\geq 2\implies j\in J_{i_0}$$
which is a contradiction and thus $\tilde{\mathbf{m}}\notin K\cup M$. So by induction $\tilde{\mathbf{m}}\in \Delta^{\mathcal{G}}$ and $\mathbf{m}_{i_0}\in \Delta^{\mathcal{G}}$ by Lemma~\ref{someroots}(1) and thus Lemma~\ref{someroots}(2) gives $\mathbf{m}\in \Delta^{\mathcal{G}}$.


\textit{Case 2:} Assume in the second case that $J_{i_0}$ is empty and consider 
$$L_{i_0}=\{j\in N_{\mathcal{G}}(i_0): 2\leq m_j\leq m_{i_0}\}.$$

\textit{Case 2.1:} Assume first that $L_{i_0}$ is non-empty and choose $j_0\in L_{i_0}$ such that $m_{j_0}$ is maximal. We further set
$$S_{j_0}=\bigg\{k\in (N_{\mathcal{G}}(j_0)\cap I^{\mathrm{it}}_1)\backslash \{i_0\}: m_k=\sum_{p\in N_{\mathcal{G}}(k)}m_p\bigg\}$$
$$R_{j_0}=\left\{k\in (N_{\mathcal{G}}(j_0)\cap I^{\mathrm{it}}_1)\backslash \{i_0\}: \text{$ m_k+1=m_{j_0}=m_{i_0}\geq 3$ and $N_{\mathcal{G}}(k)\cap I^\mathrm{it}_1\cap \mathrm{supp}(\mathbf{m})\subseteq\{j_0\}$}\right\}$$

$$T_{j_0}=S_{j_0}\cup R_{j_0},\ \ \mathbf{m}_{i_0,j_0}=\alpha_{i_0}+\alpha_{j_0}+\sum_{t\in T_{j_0}}\alpha_t,\ \ \tilde{\mathbf{m}}=\mathbf{m}-\mathbf{m}_{i_0,j_0}.$$
We first show that $\tilde{\mathbf{m}}\in P$. Clearly $\mathrm{supp}(\tilde{\mathbf{m}})=\mathrm{supp}(\mathbf{m})$ and hence the support is connected and at least of cardinality two. First note that
\begin{equation}\label{444r}(N_{\mathcal{G}}(t')\setminus\{j_0\})\cap T_{j_0}=\emptyset\ \ \forall t'\in \{i_0\}\cup S_{j_0}.\end{equation}
To see this, let $t'\in S_{j_0}$ and $k\in (N_{\mathcal{G}}(t')\setminus\{j_0\})\cap S_{j_0}$. Then $$m_k=\sum_{p\in N_{\mathcal{G}}(k)}m_p\geq m_{t'}+m_{j_0}=\sum_{\ell\in N_{\mathcal{G}}(t')}m_\ell+m_{j_0}\geq m_k+m_{j_0}$$
which is impossible. If $k\in (N_{\mathcal{G}}(t')\setminus\{j_0\})\cap R_{j_0}$ we get again a contradiction:
$$t'\in N_{\mathcal{G}}(k)\cap I^\mathrm{it}_1\cap \mathrm{supp}(\mathbf{m})\subseteq \{j_0\}.$$
If $t'=i_0$, the proof of \eqref{444r} is similar and we omit the details. 
We fix $t\in I_1^{\mathrm{it}}$ with $\tilde{m}_t\geq 2$ and show next that \eqref{44t} holds.

$\bullet$ If $t=i_0$, we have 
$$\tilde{m}_{i_0}=m_{i_0}-1\leq m_{j_0}-1+\sum_{p\in N_{\mathcal{G}}(i_0)\setminus\{j_0\}}m_p=\sum_{p\in N_{\mathcal{G}}(i_0)}\tilde{m}_p.$$
where the last equation follows from \eqref{444r}.

$\bullet$ If $t=j_0$, we immediately get \eqref{44t} since $\tilde{m}_{j_0}\leq \tilde{m}_{i_0}$ and $i_0\in N_{\mathcal{G}}(j_0)$. 

$\bullet$ Next we assume $t\in N_{\mathcal{G}}(i_0)$. We can also assume $t\neq j_0$, since $t=j_0$ is already treated above. Then we must have $\tilde{m}_t=m_t$ and if $m_t<m_{i_0}$, we immediately get \eqref{44t}. Otherwise $m_t=m_{i_0}$ and since $J_{i_0}=\emptyset$ we must have an index $j\in N_{\mathcal{G}}(t)\cap I^{\mathrm{it}}_1\cap \mathrm{supp}(\mathbf{m})$ with $j\neq i_0$. Moreover, either $j\notin T_{j_0}\cup \{j_0\}$ and $\tilde{m}_j=m_j\geq 1$ or $j\in T_{j_0}\cup \{j_0\}$ and $\tilde{m}_j=m_j-1\geq 1$. This gives once more \eqref{44t}.

$\bullet$ Next we assume $t\in S_{j_0}$. We get
$$\tilde{m}_t=m_t-1=\sum_{j\in N_{\mathcal{G}}(t)}m_j-1=\sum_{j\in N_{\mathcal{G}}(t)}\tilde{m}_j$$
where the last equation is again implied by \eqref{444r}.

$\bullet$ Next we consider $t\in R_{j_0}$. Then $\tilde{m}_t=m_t-1=m_{j_0}-2<m_{j_0}-1=\tilde{m}_{j_0}$ and $j_0\in N_{\mathcal{G}}(t)$. Thus \eqref{44t} is immediate.

$\bullet$ Next we consider $t\in N_{\mathcal{G}}(j_0)$. We can also assume that $t\notin T_{j_0}\cup\{i_0\}$ and $t \notin N_{\mathcal{G}}(i_0)$ since these cases are treated above. If there exists a node $r\in N_{\mathcal{G}}(t)\setminus\{j_0\}$ with $r\in S_{j_0}$ we get
$$\tilde{m}_r=m_r-1=m_{j_0}-1+m_t+\sum_{p\in N_{\mathcal{G}}(r)\setminus\{t,j_0\}}m_p\implies \tilde{m}_r\geq \tilde{m}_t.$$
Thus \eqref{44t} is obtained and we are done.
Hence we can suppose that $N_{\mathcal{G}}(t)\setminus\{j_0\}\cap T_{j_0}=\emptyset$ (note that $N_{\mathcal{G}}(t)\setminus\{j_0\}\cap R_{j_0}=\emptyset$ is automatic) and therefore

$$\tilde{m}_t=m_t<m_{j_0}+\sum_{k\in N_{\mathcal{G}}(t)\setminus\{j_0\}}m_k=\sum_{j\in N_{\mathcal{G}}(t)}\tilde{m}_j+1$$

$\bullet$ Next we consider $t\in N_{\mathcal{G}}(r)$  for some $r\in T_{j_0}$. Again we can assume $t\notin \{i_0,j_0\}\cup T_{j_0}$ since these cases are treated above. Then we have $r\in S_{j_0}$ and
$$m_r=m_{j_0}+m_t+\sum_{s\in N_{\mathcal{G}}(r)\setminus\{t,j_0\}}m_s\implies m_t<m_r.$$
Hence
$$\tilde{m}_t=m_t\leq m_r-1=\tilde{m}_r$$
and \eqref{44t} is obtained.

$\bullet$ Next we consider $t\notin \{i_0,j_0\}\cup N_{\mathcal{G}}(i_0)\cup N_{\mathcal{G}}(j_0)\bigcup_{r\in T_{j_0}}N_{\mathcal{G}}(r)$. Then
$$\tilde{m}_t=m_t\leq  \sum_{j\in N_{\mathcal{G}}(t)}m_j=\sum_{j\in N_{\mathcal{G}}(t)}\tilde{m}_j$$
which finally shows $\tilde{\mathbf{m}}\in P$. The proof that $\tilde{\mathbf{m}}\notin K\cup M$ is similar to Case 1 and we omit the details. Hence, by induction, $\tilde{\mathbf{m}}\in\Delta^{\mathcal{G}}$ and $\mathbf{m}_{i_0,j_0}\in \Delta^{\mathcal{G}}$ by Lemma~\ref{someroots}(1) as $T_{j_0}\cup\{i_0\}\cup\{j_0\}$ is connected. Thus by Lemma~\ref{someroots}(2) again $\mathbf{m}\in\Delta^{\mathcal{G}}$.


\textit{Case 2.2:} Now we consider the case $L_{i_0}=\emptyset$, i.e., for any
$j\in N_{\mathcal{G}}(i_0)\cap \mathrm{supp}(\mathbf{m})$ we have $m_j=1$. We fix $j_0\in N_{\mathcal{G}}(i_0)\cap \mathrm{supp}(\mathbf{m})$ and define as before
$$S_{j_0}=\bigg\{k\in (N_{\mathcal{G}}(j_0)\cap I^{\mathrm{it}}_1)\backslash \{i_0\}: m_k=\sum_{p\in N_{\mathcal{G}}(k)}m_p\bigg\}.$$

\textit{Case 2.2.1:}
Suppose that the full subgraph of $\mathcal{G}$ spanned by $\mathrm{supp}(\mathbf{m})\setminus \{j_0\}$ is connected. Define
$$\mathbf{m}_{i_0,j_0}=\alpha_{i_0}+\alpha_{j_0}+\sum_{t\in S_{j_0}}\alpha_t\in \Delta^\mathcal{G},\ \ \tilde{\mathbf{m}}=\mathbf{m}-\mathbf{m}_{i_0,j_0}.$$
As before, induction and Lemma~\ref{someroots} will complete the proof in this case, once we have shown $\tilde{\mathbf{m}}\in P\backslash (K\cup M)$.
Note that $m_{j_0}=1$ and $m_k\ge 2$ for all $k\in S_{j_0}$
implies that $\mathrm{supp}(\tilde{\mathbf{m}}) = \mathrm{supp}(\mathbf{m})\setminus \{j_0\}.$ Moreover, $|\mathrm{supp}(\tilde{\mathbf{m}})|\geq 2$ since otherwise
\begin{equation}\label{assasin}2\leq m_{i_0}\leq \sum_{k\in N_{\mathcal{G}}(i_0)}m_k=1.\end{equation}
So let $t\in I_1^{\mathrm{it}}$ with $\tilde{m}_t\ge 2$ and assume additionally that $t\in N_{\mathcal{G}}(r)$ for some $r\in \{i_0, j_0\}\cup S_{j_0}$; otherwise \eqref{44t} is immediate from $\mathbf{m}\in P$. Also note that $t\in N_{\mathcal{G}}(i_0)$ is not possible since on the one hand $m_t\geq \tilde{m}_t\geq 2$ and on the other hand $m_t=1$.

$\bullet$ For $t =i_0$, we have $\tilde{m}_p = m_p$ for all $p\in N_{\mathcal{G}}(i_0)\setminus\{j_0\}$ since each element of $N_{\mathcal{G}}(i_0)\setminus\{j_0\}$ is not contained in $S_{j_0}.$ Hence \eqref{44t} is obtained.

$\bullet$ Consider the case $t\in S_{j_0}$ and assume additionally $t\neq i_0$. Note that in this case if $p\in N_{\mathcal{G}}(t)\setminus\{j_0\}$, then $p\notin S_{j_0}$ and also $p\neq i_0$. Thus $m_p=\tilde{m}_p$ and \ref{44t} follows. 

$\bullet$ Consider the case $t\in N_{\mathcal{G}}(j_0)$ and assume additionally $t\notin \{i_0\}\cup S_{j_0}$ by the cases treated above. 
If $p\in N_{\mathcal{G}}(t)\setminus\{j_0\}$ and $p\in S_{j_0}$, then we have $t, j_0\in N_{\mathcal{G}}(p)$ and
$m_p = \sum_{s\in N_{\mathcal{G}}(p)}m_s \ge 1+ m_t$. This implies that $\tilde{m_p} \ge m_t$ and \ref{44t} holds. So suppose $p\notin S_{j_0}$ for all $p\in N_{\mathcal{G}}(t)\setminus\{j_0\}$, then we have $\tilde{m}_p = m_p$ for all $p\in N_{\mathcal{G}}(t)\setminus\{j_0\}$. Thus,  
$$\tilde{m}_t=m_t\leq \sum_{k\in N_{\mathcal{G}}(t)\setminus\{j_0\}}m_k +m_{j_0}-1=\sum_{k\in N_{\mathcal{G}}(t)}\tilde{m}_k$$
where the inequality follows from $\mathbf{m}\in P$ and $t\notin S_{j_0}$. Therefore \ref{44t} follows.

$\bullet$ Consider the case $t\in N_{\mathcal{G}}(r)$ for some $r\in S_{j_0}.$ By the cases treated above we can assume $t\notin \{i_0\}\cup S_{j_0}$. Thus $\tilde{m}_t =m_t\le m_r-1 = \tilde{m_r}$ as $t\neq j_0$ and $r\in S_{j_0}$. This implies \ref{44t}.

Hence $\tilde{\mathbf{m}}\in P$ and since $\tilde{m}_j = 1$ for some $j\in \mathrm{supp}(\tilde{\mathbf{m}})$, we also have $\tilde{\mathbf{m}}\notin K\cup M$.

\textit{Case 2.2.2:}
Suppose that the full subgraph of $\mathcal{G}$ spanned by $\mathrm{supp}(\mathbf{m})\setminus \{j_0\}$ is not connected and denote the connected components by $\mathcal{G}_{(1)},\dots,\mathcal{G}_{(r)}$, $r\geq 2$. Let $V(\mathcal{G}_{(i)})$ be the vertex set of $\mathcal{G}_{(i)}$  and assume that $i_0\in V(\mathcal{G}_{(1)})$. Set $V_{i_0,j_0}=\bigcup_{s=2}^r V(\mathcal{G}_{(s)})$ and

$$\mathbf{m}_{i_0,j_0}=\alpha_{i_0}+\alpha_{j_0}+\sum_{k\in V_{i_0,j_0}}m_k\alpha_k+\sum_{k\in S_{j_0}\cap V(\mathcal{G}_1)}\alpha_k=(n_s:s\in I),\ \ \tilde{\mathbf{m}}=\mathbf{m}-\mathbf{m}_{i_0,j_0}$$ 

Note that $\mathrm{supp}(\mathbf{m}_{i_0,j_0})$ is connected and $|\mathrm{supp}(\mathbf{m}_{i_0,j_0})|\geq 2$. 
If $t\in I_1^{\mathrm{it}}$ with $n_t\geq 2$, we must have $\{t\}\cup N_\mathcal{G}(t)\subseteq V_{i_0,j_0}\cup\{j_0\}$ and thus
 $$n_t=m_t\leq\sum_{p\in N_\mathcal{G}(t)}m_p=\sum_{p\in N_\mathcal{G}(t)}n_p\implies \mathbf{m}_{i_0,j_0}\in P.$$ Since $n_{i_0}=1$ we also have $\mathbf{m}_{i_0,j_0}\in P\backslash (K\cup M)$ and induction gives $\mathbf{m}_{i_0,j_0}\in \Delta^\mathcal{G}$.

Next we will show that $\tilde{\mathbf{m}}\in P$. The full subgraph generated by $\mathrm{supp}(\tilde{\mathbf{m}})$ is $\mathcal{G}_{(1)}$ and hence connected. Also  $|\mathrm{supp}(\tilde{\mathbf{m}})|\geq 2$ because $m_{i_0}\geq 2,$  and  $N_{\mathcal{G}}(i_0)\cap \mathrm{supp}({\mathbf{m}})$ must contain a node different from $j_0$ (otherwise we get a coontradiction as in \eqref{assasin}). Fix $t\in I^\mathrm{it}_1$ with $\tilde{m}_t\geq 2$ and we claim that \eqref{44t} holds.

$\bullet$ If $ t=i_0$, we have $n_p=0$ for all $p\in (N_\mathcal{G}(i_0)\cap \mathrm{supp}(\mathbf{m}))\setminus\{j_0\}$ and thus
$$\tilde{{m}}_{i_0}=m_{i_0}-1\leq \sum_{p\in N_\mathcal{G}(i_0)\setminus\{j_0\}}m_p= \sum_{p\in N_\mathcal{G}(i_0)\setminus\{j_0\}} \tilde{m}_{p}.$$

$\bullet$ If $t\in S_{j_0}\cap V(\mathcal{G}_1)$, and $p\in N_\mathcal{G}(t)\setminus\{j_0\}$, then $p\notin S_{j_0}$ and also $p\neq i_0$ (otherwise $2\leq m_t=1$). Thus $\tilde{m}_p=m_p$ and we get 
$$ \tilde{m}_t=m_t-1=\sum_{p\in N_\mathcal{G}(t)\setminus\{j_0\}}m_p=\sum_{p\in N_\mathcal{G}(t)}\tilde{m}_p.$$

$\bullet$ If $t\in N_\mathcal{G}(j_0)$ we can also assume $t\notin \{i_0\}\cup (S_{j_0}\cap  V(\mathcal{G}_{(1)}))$ by the cases treated above. If $t\in V_{i_0,j_0} $  we get $\tilde{m}_t= m_t-n_t=0$ which is absurd. Otherwise $t\in V(\mathcal{G}_{(1)})$ and $\tilde{m}_t= m_t$. If there exists $p\in N_{\mathcal{G}}(t)\setminus\{j_0\}$ with $p\in S_{j_0}\cap V(\mathcal{G}_{(1)})$ we get 
$$t, j_0\in N_{\mathcal{G}}(p)\implies m_p = \sum_{s\in N_{\mathcal{G}}(p)}m_s \ge 1+ m_t\implies \tilde{m}_p \geq \tilde{m}_t\implies \eqref{44t}.$$ So suppose $p\notin S_{j_0}\cap V(\mathcal{G}_{(1)})$ for all $p\in N_{\mathcal{G}}(t)\setminus\{j_0\}$, then  $p\neq i_0$ because $m_k=1\ \forall k\in N_\mathcal{G}(i_0)\cap \mathrm{supp}(\mathbf{m})$, and $t\in  V(\mathcal{G}_{(1)})$ implies $ p\notin V_{i_0,j_0}$.  Hence $\tilde{m}_p = m_p$  for all $p\in N_{\mathcal{G}}(t)\setminus\{j_0\}$ and \ref{44t} follows.

$\bullet$ If $t\in N_\mathcal{G}(r)$ for some $r\in S_{j_0}\cap  V(\mathcal{G}_{(1)})$, we will assume additionally $t\notin \{i_0\}\cup (S_{j_0}\cap  V(\mathcal{G}_{(1)}))$ by the cases treated above and note that $t=j_0$ is impossible. Thus
$$m_r=m_{j_0}+\sum_{p\in N_\mathcal{G}(r)\setminus\{j_0\}}m_p\implies m_t<m_r\implies \tilde{m}_t=m_t\leq m_r-1=\tilde{m}_r\implies \eqref{44t}.$$

$\bullet$ If $t\notin \left(N_\mathcal{G}(j_0)\bigcup_{r\in (S_{j_0}\cap V(\mathcal{G}_{(1)}))}N_\mathcal{G}(r) \right)$, then we have (note that $t\in V_{i_0,j_0}$ is impossible) 
$$\tilde{m}_t\leq m_t\leq  \sum_{j\in N_{\mathcal{G}}(t)}m_j=\sum_{j\in N_{\mathcal{G}}(t)}\tilde{m}_j.$$

Thus we have $\tilde{\mathbf{m}}\in P$. Since $\tilde{m}_k=1$ for some $k\in \mathrm{supp}({\tilde{\mathbf{m}}})$, we have $\tilde{\mathbf{m}}\notin (K\cup M)$ and hence by induction $\tilde{\mathbf{m}}\in \Delta^\mathcal{G}$. Again Lemma~\ref{someroots}(2) finishes the proof. 
\end{proof}
 

\section{Right-angled Coxeter Groups and Partially commutative superalgebras}\label{racgsection}

In this section, we explore the connection between right-angled Coxeter groups (RACG) and partially commutative Lie superalgebras. We focus on the case when $\mathcal{G}$ is a marked graph with $I = I_1^{\mathrm{it}}$, i.e., all the vertices are odd isotropic. In this special case, we prove that there is a natural $\mathbb{C}$-vector isomorphism  between the universal enveloping algebra of $\mathcal{P}(\mathcal G)$ and  the group algebra of the right-angled Coxeter group associated with $\mathcal{G}$.
This will allow to express the Hilbert series of RACG in terms of marked multivariate independece series.
\subsection{}
When $I = I_1^{\mathrm{it}}$, the universal enveloping algebra $\mathbf{U}_{\mathcal{G}}$  of $\mathcal{P}(\mathcal G)$ is a $\mathbb C$-associative superalgebra generated by
$e_i, i\in I$ with relations $$e_i^2 = 0,\ i\in I, \, \, e_ie_j =- e_je_i,\ (i, j)\notin E.$$
By Proposition~\ref{denomi}, the Hilbert series of $\mathbf{U}_{\mathcal{G}}$ is given by 
$$\mathcal{H}_{\mathbf{U}_{\mathcal{G}}}(\bold x) = \frac{1}{I(\mathcal{G}, -\mathbf{x})},$$
where in the case of $I= I_1^{\mathrm{it}}$ we have 
$$ I(\mathcal{G}, \bold x) = \sum\limits_{S\in \widetilde{\mathcal{I}}(\mathcal{G})} \sum\limits_{\ell_i\ge 1, i\in S} \prod_{i\in S} x_i^{\ell_i} = \sum\limits_{S\in \widetilde{\mathcal{I}}(G)}\left(\prod_{i\in S}\frac{x_i}{1-x_i}\right)$$
and $\widetilde{\mathcal{I}}(G)$ is the set of finite independent subsets of $\mathcal{G}$. Thus we have
$$\mathcal{H}_{\mathbf{U}_{\mathcal{G}}}(\bold x) = \left(\sum\limits_{S\in \widetilde{\mathcal{I}}(G)}\left(\prod\limits_{i\in S}\frac{-x_i}{1+x_i}\right)\right)^{-1}.$$

\subsection{}
Recall that the right-angled Coxeter group associated with $\mathcal{G}$ is defined to be the group generated by the generators $s_i, i\in I$ and with relations $$s_i^2 = 1,\ i\in I, \, \, s_is_j = s_js_i,\ (i, j)\notin E.$$
The Poincare series of $W_{\mathcal{G}}$ defined by $$P_{\mathcal{G}}(t) = \sum\limits_{w\in W_{\mathcal{G}}} t^{\ell(w)}$$
has a well-known description which we summarize below; here, $\ell(w)$ is the spherical growth function (or length function with respect to the Coxeter generators $s_i, i\in I$). The following can be found in \cite[Prop. 17.4.2]{Davis2015}. 
\begin{prop}\label{davis} Let $W_{\mathcal{G}}$ be the right-angled Coxeter group associated with $\mathcal{G}$. Then we have
$$P_{\mathcal{G}}(t) = \frac{1}{I_{\mathcal{G}}\left(\frac{-t}{1+t}\right)}$$
where $I_{\mathcal{G}}(t) = \sum\limits_{k\ge 0} a_k t^k$ denotes the usual one-variable independence polynomial of $G$, i.e. $a_k$ counts the number of independent subsets of $\mathcal{G}$ of size $k.$ \qed
\end{prop}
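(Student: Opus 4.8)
The plan is to reduce the statement to the multivariate Hilbert series computation already available through Proposition~\ref{monoid basis} and Proposition~\ref{denomi}, and then to specialize all variables to a single variable $t$. The crucial point, in the case $I=I_1^{\mathrm{it}}$, is that the elements of $W_{\mathcal{G}}$ are naturally parametrized by the monoid $\mathcal{M}'(\mathcal{G})$ in a length-preserving fashion.

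First I would make this parametrization precise. For a right-angled Coxeter group the only braid relations are the commutations $s_is_j=s_js_i$ for $(i,j)\notin E$; hence, by Tits' solution of the word problem, two reduced expressions represent the same element of $W_{\mathcal{G}}$ if and only if they differ by a sequence of such commutations. Consequently a reduced expression $w=s_{i_1}\cdots s_{i_r}$ is determined, up to these moves, by its class $[i_1\cdots i_r]$ in the Cartier--Foata monoid $\mathcal{M}(\mathcal{G})$, and a class in $\mathcal{M}(\mathcal{G})$ arises from a reduced expression precisely when no generator can be cancelled, i.e. when no two letters at a common position can be brought adjacent by commutations. Reading this off the associated heap, it is exactly the defining condition of $\mathcal{M}'(\mathcal{G})$ when all vertices are odd isotropic (there it reads $a_i(w_2)\le 1$ for every factorization $[w]=[w_1w_2]$ and every $i\in I_1^{\mathrm{it}}=I$). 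This yields a bijection $W_{\mathcal{G}}\xrightarrow{\sim}\mathcal{M}'(\mathcal{G})$ with $\ell(w)=|w|$.

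With this identification, $P_{\mathcal{G}}(t)=\sum_{[w]\in\mathcal{M}'(\mathcal{G})}t^{|w|}$, which is the specialization $x_i\mapsto t$ of $\sum_{[w]\in\mathcal{M}'(\mathcal{G})}x(w)$. By Proposition~\ref{monoid basis} this series is the Hilbert series $\mathcal{H}_{\mathbf{U}_{\mathcal{G}}}(\mathbf{x})$, which by Proposition~\ref{denomi} equals $\mathcal{I}(\mathcal{G},-\mathbf{x})^{-1}$. It then remains to specialize: using the description of $\mathcal{I}(\mathcal{G},\mathbf{x})$ for $I=I_1^{\mathrm{it}}$ recorded in the previous subsection and setting all $x_i=-t$ gives
$$\mathcal{I}(\mathcal{G},-t,\dots,-t)=\sum_{S\in\widetilde{\mathcal{I}}(\mathcal{G})}\left(\frac{-t}{1+t}\right)^{|S|}=\sum_{k\ge 0}a_k\left(\frac{-t}{1+t}\right)^k=I_{\mathcal{G}}\!\left(\frac{-t}{1+t}\right),$$
whence $P_{\mathcal{G}}(t)=1/I_{\mathcal{G}}(-t/(1+t))$, as claimed.

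The main obstacle is the first step: justifying the length-preserving bijection $W_{\mathcal{G}}\cong\mathcal{M}'(\mathcal{G})$. This rests on the fact that in a right-angled Coxeter group every element has a reduced word unique up to commutations, and that reducibility is detectable locally on the heap as a repeated position with no separating adjacent piece; once this is translated into the initial-multiplicity condition defining $\mathcal{M}'(\mathcal{G})$, the remainder is a formal specialization of the denominator identity. An alternative, if one prefers to avoid the word-problem input, is to cite \cite{Davis2015} for the bijection directly and then recover the multivariate refinement from Proposition~\ref{denomi}.
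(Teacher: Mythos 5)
Your proof is correct and follows essentially the same route as the paper: the paper states Proposition~\ref{davis} as a citation of \cite[Prop.~17.4.2]{Davis2015}, but then recovers it exactly as you do, by identifying $W_{\mathcal{G}}$ length-preservingly with the monomial basis of $\mathbf{U}_{\mathcal{G}}$ indexed by $\mathcal{M}'(\mathcal{G})$ (Theorem~\ref{racghilber}), invoking the denominator identity of Proposition~\ref{denomi}, and specializing $x_i\mapsto t$. The only content you add is the explicit Tits word-problem justification of the bijection $W_{\mathcal{G}}\cong\mathcal{M}'(\mathcal{G})$, which the paper leaves implicit in Theorem~\ref{racghilber}.
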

Using partially commutative Lie superalgebras, we will refine the above result.
\subsection{} The group algebra of $W_{\mathcal G}$, denoted as $\mathbb{C}[W_{\mathcal{G}}]$, is a $\mathbb C$-associative algebra generated by the generators $e(s_i), 1\le i\le n$ with relations
$$e(s_i)^2 = 1,\ i\in I\ \ e(s_i)e(s_j) = e(s_j)e(s_i),\ (i, j)\notin E.$$ For an element $w= s_{i_1}\cdots s_{i_k}\in W_{\mathcal{G}}$, we set $e(w) = e(s_{i_1})\cdots e(s_{i_k})$. Note that $e(w)$ is independent of the expression of $w$ and  $\{e(w) : w\in W_{\mathcal{G}}\}$ forms a basis for $\mathbb{C}[W_G]$. Moreover, if $w=s_{i_1}\cdots s_{i_k}$ is a reduced expression of $w$ we set $\bold x(w) = x_{i_1}\cdots x_{i_k}$. Note that $\bold x(w)$ depends only on $w$, not on the reduced expression chosen. The Hilbert series of $\mathbb{C}[W_{\mathcal{G}}]$ is given by
$$\mathcal{H}_{\mathcal{G}}(\bold x) = \sum\limits_{w\in W_{\mathcal{G}}} \bold x(w).$$

\begin{thm}\label{racghilber}
We have a $\mathbb C$-vector space isomorphism between $\mathbf{U}_{\mathcal{G}}$ and $\mathbb{C}[W_{\mathcal{G}}]$ which maps a basis element $e_{i_1}\cdots e_{i_k}$ of $\mathbf{U}_{\mathcal{G}}$ to the expression $e(s_{i_1})\cdots e(s_{i_k}).$ In particular, it is a grade-preserving isomorphism, and we have
$$\mathcal{H}_{\mathcal{G}}(\bold x) = \frac{1}{I({\mathcal{G}},-\bold x)}=\left(\sum\limits_{S\in \widetilde{\mathcal{I}}(G)}\left(\prod_{i\in S}\frac{-x_i}{1+x_i}\right)\right)^{-1}.$$ \qed
\end{thm}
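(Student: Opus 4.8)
The plan is to produce bases of $\mathbf{U}_{\mathcal{G}}$ and of $\mathbb{C}[W_{\mathcal{G}}]$ that are both naturally indexed by the monoid $\mathcal{M}'(\mathcal{G})$, and then take $\phi$ to be the linear map matching these two bases. Since here $I=I_1^{\mathrm{it}}$, Proposition~\ref{monoid basis} gives that $\{e_{w_{\mathrm{max}}} : [w]\in \mathcal{M}'(\mathcal{G})\}$ is a basis of $\mathbf{U}_{\mathcal{G}}$, while $\{e(w) : w\in W_{\mathcal{G}}\}$ is a basis of $\mathbb{C}[W_{\mathcal{G}}]$ and $e(w)$ depends only on the group element $w$. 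As a first step I would observe that, in the present case, the defining condition of $\mathcal{M}'(\mathcal{G})$ --- namely $a_i(w_2)\le 1$ for every factorization $[w]=[w_1w_2]$ and every $i\in I$ --- says exactly that no representative of $[w]$ can be transformed, by the commutations $ab\sim ba$ for $(a,b)\notin E$, into a word containing two adjacent equal letters.

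The crux is then the bijection $\mathcal{M}'(\mathcal{G})\to W_{\mathcal{G}}$, $[w]\mapsto \bar w$, sending a trace to the group element it represents. I would argue it using the normal-form theory for right-angled Coxeter groups: the generators and commutations of $W_{\mathcal{G}}$ are inherited from the Cartier--Foata monoid $\mathcal{M}(\mathcal{G})$, the only added relations are $s_i^2=1$, and the only Coxeter relations beyond these are the commutations $s_is_j=s_js_i$ for $(i,j)\notin E$. By the solution of the word problem (Tits' theorem together with the deletion condition, cf. \cite{Davis2015}), a word is reduced iff it cannot be shortened, a shortening is possible iff commutations create two adjacent equal letters, and any two reduced expressions of the same element differ only by commutations. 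Combined with the reformulation above, this shows that the reduced expressions of elements of $W_{\mathcal{G}}$ are precisely the representatives of the traces in $\mathcal{M}'(\mathcal{G})$, so $[w]\mapsto\bar w$ is a well-defined bijection, and hence $\{e(w):w\in W_{\mathcal{G}}\}=\{e(\bar w):[w]\in\mathcal{M}'(\mathcal{G})\}$.

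With the bijection established, I would set $\phi(e_{w_{\mathrm{max}}})=e(\bar w)$ and extend linearly; carrying a basis bijectively onto a basis, $\phi$ is a vector space isomorphism, and writing $w_{\mathrm{max}}=i_1\cdots i_k$ we get $e(\bar w)=e(s_{i_1})\cdots e(s_{i_k})$, which matches the stated formula. For grade preservation, note $e_{w_{\mathrm{max}}}$ lies in $\mathbb{Z}_+^I$-degree equal to the weight $\sum_j\alpha_{i_j}$ of $[w]$, while $e(\bar w)$ sits in the degree recorded by $\bold x(\bar w)=\prod_i x_i^{c_i}$, where $c_i$ counts occurrences of $i$ in the reduced word $w_{\mathrm{max}}$; these agree because commutations preserve letter content (which is also why $\bold x(\bar w)$ is well defined). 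Therefore $\mathcal{H}_{\mathcal{G}}(\bold x)=\mathcal{H}_{\mathbf{U}_{\mathcal{G}}}(\bold x)$, and the identity $\mathcal{H}_{\mathbf{U}_{\mathcal{G}}}(\bold x)=1/I(\mathcal{G},-\bold x)$ obtained from Proposition~\ref{denomi}, together with the explicit expansion over $\widetilde{\mathcal{I}}(\mathcal{G})$ recorded just before the theorem, gives the displayed formula.

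I expect the main obstacle to be precisely the identification $\mathcal{M}'(\mathcal{G})\cong W_{\mathcal{G}}$: one must invoke (or reprove) the reduced-word theory for right-angled Coxeter groups and check carefully that the monoid-theoretic condition defining $\mathcal{M}'(\mathcal{G})$ coincides with reducedness in $W_{\mathcal{G}}$. Everything after that is bookkeeping. I would also emphasize that $\phi$ is only a graded \emph{vector space} isomorphism and not an algebra map, since the defining relations differ both in sign ($e_ie_j=-e_je_i$ versus $e(s_i)e(s_j)=e(s_j)e(s_i)$) and in $e_i^2=0$ versus $e(s_i)^2=1$.
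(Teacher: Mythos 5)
Your proposal is correct, and it supplies exactly the argument the paper leaves implicit: the theorem is stated with a \qed and no written proof, being regarded as immediate from Proposition~\ref{monoid basis}, Proposition~\ref{denomi}, and the preceding discussion of $\mathbf{U}_{\mathcal{G}}$ and $\mathbb{C}[W_{\mathcal{G}}]$ in the case $I=I_1^{\mathrm{it}}$. Your key step --- observing that for $I=I_1^{\mathrm{it}}$ the defining condition of $\mathcal{M}'(\mathcal{G})$ means no representative contains two adjacent equal letters, and then invoking Tits' solution of the word problem to identify $\mathcal{M}'(\mathcal{G})$ with the set of elements of $W_{\mathcal{G}}$ via reduced expressions --- is precisely the bridge needed to match the monomial basis $\{e_{w_{\mathrm{max}}}\}$ with the group basis $\{e(w)\}$, and your grading and Hilbert-series bookkeeping is sound.
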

Hence, we recover Proposition~\ref{davis} by substituting $x_i=t$ for all $i\in I$.
With the notations of Section \ref{peographsection} we obtain the Hilbert series of RACG for PEO-graphs.
\begin{cor}
 Let $\mathcal{G}$ be a PEO-graph with a countable vertex set $I$. Then we have 
 $$\mathcal{H}_{\mathcal{G}}(\bold x) =\sum\limits_{\boldsymbol{m}\in \mathbb{Z}^I_+}\left(\sum\limits_{\boldsymbol{\lambda}\in S(\bold m)}(-1)^{|\mathbf{m}|+\sum_{j\in \mathrm{supp}(\bold{m})}\ell(\boldsymbol{\lambda}_{j})}\prod _{j\in \mathrm{supp}(\bold{m})} \binom{\sum\limits_{i\in \mathcal{G}_j}\ell(\boldsymbol{\lambda}_{i})}{\ell(\boldsymbol{\lambda}_{j})} \frac{\ell(\boldsymbol{\lambda}_{j})!}{\prod_{k=1}^{\infty}(d^{\boldsymbol{\lambda}_{j}}_{k}!)}\right) x^\mathbf{m}$$ \qed
 
\end{cor}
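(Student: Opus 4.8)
The plan is to specialize the general formulas already established in the previous sections, so the argument is essentially a bookkeeping computation. By Theorem~\ref{racghilber} we have $\mathcal{H}_{\mathcal{G}}(\bold x) = \mathcal{I}(\mathcal{G},-\bold x)^{-1}$, so the task reduces to expanding the right-hand side as a power series in the monomials $\bold x^{\bold m}$. For this I would apply Theorem~\ref{expmarkedindp} with the specialization $q=-1$ together with the substitution $\bold x\mapsto -\bold x$, using $(-\bold x)^{\bold m}=(-1)^{|\bold m|}\bold x^{\bold m}$, to obtain
$$\mathcal{H}_{\mathcal{G}}(\bold x)=\mathcal{I}(\mathcal{G},-\bold x)^{-1}=\sum_{\bold m\in \mathbb{Z}_+^I}(-1)^{|\bold m|}\,{_{\bold m}\Pi^{\mathrm{mark}}_\mathcal{G}}(-1)\,\bold x^{\bold m}.$$
Thus it suffices to evaluate the marked chromatic polynomial of the PEO-graph $\mathcal{G}$ at $q=-1$.

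The key step is to insert the explicit PEO-formula of Theorem~\ref{peomarked}. Setting $q=-1$ there produces, for each $\boldsymbol{\lambda}\in S(\bold m)$, the factors $\binom{-1-b^{\boldsymbol{\lambda}}_j}{\ell(\boldsymbol{\lambda}_j)}$ with $b^{\boldsymbol{\lambda}}_j=\sum_{i\in \mathcal{G}_j\setminus\{j\}}\ell(\boldsymbol{\lambda}_i)$. I would then use the negative-upper-index convention $\binom{-n}{k}=(-1)^k\binom{n+k-1}{k}$ recalled in the proof of Theorem~\ref{expmarkedindp}, applied with $n=1+b^{\boldsymbol{\lambda}}_j\in\mathbb{N}$ and $k=\ell(\boldsymbol{\lambda}_j)$, to get
$$\binom{-1-b^{\boldsymbol{\lambda}}_j}{\ell(\boldsymbol{\lambda}_j)}=(-1)^{\ell(\boldsymbol{\lambda}_j)}\binom{b^{\boldsymbol{\lambda}}_j+\ell(\boldsymbol{\lambda}_j)}{\ell(\boldsymbol{\lambda}_j)}=(-1)^{\ell(\boldsymbol{\lambda}_j)}\binom{\sum_{i\in \mathcal{G}_j}\ell(\boldsymbol{\lambda}_i)}{\ell(\boldsymbol{\lambda}_j)},$$
where the second equality uses $b^{\boldsymbol{\lambda}}_j+\ell(\boldsymbol{\lambda}_j)=\sum_{i\in \mathcal{G}_j}\ell(\boldsymbol{\lambda}_i)$, which holds since $\mathcal{G}_j$ is the induced subgraph on $(\mathcal{G}_j\setminus\{j\})\cup\{j\}$.

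Finally I would collect the signs. The product of the factors $(-1)^{\ell(\boldsymbol{\lambda}_j)}$ over $j\in\mathrm{supp}(\bold m)$ contributes $(-1)^{\sum_{j\in\mathrm{supp}(\bold m)}\ell(\boldsymbol{\lambda}_j)}$, and this combined with the prefactor $(-1)^{|\bold m|}$ reproduces exactly the sign $(-1)^{|\bold m|+\sum_{j}\ell(\boldsymbol{\lambda}_j)}$ appearing in the statement. Substituting the resulting expression for ${_{\bold m}\Pi^{\mathrm{mark}}_\mathcal{G}}(-1)$ into the power-series expansion above yields the claimed formula term by term. I do not anticipate a genuine obstacle: the whole argument is a specialization of Theorems~\ref{racghilber},~\ref{expmarkedindp} and~\ref{peomarked}, and the only point requiring care is the correct application of the binomial convention for negative upper indices together with the matching of the two sign contributions.
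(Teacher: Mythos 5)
Your proposal is correct and is essentially the paper's own (implicit) proof: the corollary is stated without further argument precisely because it follows by combining Theorem~\ref{racghilber}, Theorem~\ref{expmarkedindp} specialized at $q=-1$ together with the substitution $\mathbf{x}\mapsto-\mathbf{x}$, and the PEO formula of Theorem~\ref{peomarked}, exactly as you do. Your sign bookkeeping and the use of $\binom{-n}{k}=(-1)^k\binom{n+k-1}{k}$ are correct, the only point worth making explicit being that Theorem~\ref{peomarked} may be evaluated at $q=-1$ because both sides are polynomials in $q$ agreeing on all of $\mathbb{N}$.
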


\bibliographystyle{plain}
\bibliography{chromatic}

\end{document}